\def\blfootnote{\gdef\@thefnmark{}\@footnotetext}
\author{Marcin Kotowski \footnote{{\tt mk249015@students.mimuw.edu.pl}} \quad \, Micha\l{} Kotowski \footnote{{\tt mk249019@students.mimuw.edu.pl}} \\ [+0.8ex]
\small Faculty of Mathematics, Informatics and Mechanics \\
\small University of Warsaw, \\
\small Banacha 2, 02-097 Warszawa, Poland.
}
\title{Random groups and property (T): \.Zuk's theorem revisited}
\newtheorem{theorem}{Theorem}[section]
\newtheorem{theoremalph}{Theorem}[section]
\newtheorem{remark}[theorem]{Remark}
\newtheorem{lemma}[theorem]{Lemma}
\newtheorem{corollary}[theorem]{Corollary}
\theoremstyle{definition}
\newtheorem{definition}[theorem]{Definition}
\newcommand{\pres}[2]{\langle #1 \vert #2 \rangle}
\newcommand{\gromov}[3]{\mathcal{G}(#1,#2,#3)}
\newcommand{\triangular}[2]{\mathcal{M}(#1,#2)}
\newcommand{\triangularplus}[2]{\mathcal{M}^{+}(#1,#2)}
\newcommand{\permutation}[2]{\mathcal{F}(#1,#2)}
\newcommand{\permutationr}[2]{\mathcal{F}^{red}(#1,#2)}
\newcommand{\permgraph}[2]{\mathcal{L}(#1,#2)}
\newcommand{\permrgraph}[2]{\mathcal{L}^{red}(#1,#2)}
\newcommand{\tripartite}[2]{G_3(#1, #2)}
\newcommand{\tripartiter}[2]{G_3^{red}(#1, #2)}
\newcommand{\hypergraph}[1]{L_3(#1)}
\newcommand{\scalar}[2]{\langle #1, #2 \rangle}
\newcommand{\Ss}{\mathcal{S}}
\newcommand{\R}{\mathcal{R}}
\newcommand{\Ll}{\mathcal{L}}
\newcommand{\norm}[1]{\Vert #1 \Vert}
\begin{document}

\blfootnote{2010 Mathematics Subject Classification 20F65.}

\maketitle

\begin{abstract}
We provide a full and rigorous proof of a theorem attributed to \.Zuk, stating that random groups in the Gromov density model for $d > 1/3$ have property (T) with high probability. The original paper had numerous gaps, in particular, crucial steps involving passing between different models of random groups were not described. We fix the gaps using combinatorial arguments and a recent result concerning perfect matchings in random hypergraphs. We also provide an alternative proof, avoiding combinatorial difficulties and relying solely on spectral properties of random graphs in $G(n, p)$ model. 
\end{abstract}

\section{Introduction}\label{ch1}

The topic of this paper are two important concepts in geometric group theory --- random groups and property (T).

Random groups have been an active field of study ever since their introduction by Gromov in the early nineties \cite{gromov}. The main focus of this theory is on asymptotic properties of groups. Roughly speaking, we define a model in which group presentations are chosen at random, subject to certain restrictions, and depend on some parameter going to infinity. We then study the probability that a random group has given property (for example, is trivial, free, word hyperbolic etc.). It turns out that frequently this probability tends to $0$ or $1$, so we can say that asymptotically ``almost none'' or ``almost all'' groups have the considered property.

Various different models of choosing a group at random have been proposed. The most popular and widely studied is the {\it Gromov density model} \cite{gromov}, in which the presentations have fixed number of generators, but the length and number of relators go to infinity. The number of relators is controlled by a parameter $d \in [0, 1]$, called the density. It has been discovered that many properties exhibit a ``phase transition'' --- below a certain critical density almost all groups have the property, while above it almost none do. The most celebrated example is Gromov's theorem (\cite{gromov}, see \cite{oll-update} for $d=\frac{1}{2}$), which states that for $d \geq \frac{1}{2}$ a random group in this model is almost surely trivial or $\mathbb{Z}_{2}$, while for $d < \frac{1}{2}$ it is almost surely infinite and word hyperbolic (where ``almost surely'' means ``with probability tending to $1$ as the relevant parameters go to infinity''). Therefore the Gromov model gives us new examples of hyperbolic groups.

An excellent survey of random groups is \cite{ollivier2005}, where the reader can find out more about the history of this field, important results and further references.

The second concept, {\it property (T)}, was first introduced by Kazhdan to study certain properties of lattices in Lie groups (hence it is also known as {\it Kazhdan's property}). Later it turned out that this property, concerning unitary representations of groups and their invariant vectors, is relevant to many other areas of mathematics and also computer science. A comprehensive treatment of this topic and its history can be found in \cite{bekka2008kazhdan}, here we will only mention that property (T) has found applications in representation theory, harmonic analysis, ergodic theory, expanding graphs, computational group theory and measure theory. For example, groups with property (T) were used by Margulis to give a first explicit construction of families of expanders, graphs with good spectral properties which are important in theoretical computer science (we will also encounter spectral graph theory in later sections). Many such applications are described in \cite{lubotzky}.

Interestingly, for a long time few examples of groups having property (T) were known, most of them associated with lattices in Lie groups. As we will see, it is random group theory that gives us new examples of such groups. Actually, in a suitably defined sense ``most'' groups have property (T), in the same way as ``most'' groups in the Gromov model are trivial or hyperbolic for certain densities.

This remarkable result is due to \.Zuk and was proved in \cite{zuk}. In this paper \.Zuk shows this for the {\it triangular model}, a model of random groups closely related to the Gromov model:

\begin{theoremalph}\label{th:theoremA}
For density $d > \frac{1}{3}$, a random group in the triangular model $\triangular{m}{d}$ has property \emph{(T)} with overwhelming probability.
\end{theoremalph}

Analogous statement for the Gromov model is attributed to \.Zuk, although it does not appear in \cite{zuk}:

\begin{theoremalph}\label{th:theoremB}
For density $d > \frac{1}{3}$, a random group in the Gromov model $\gromov{n}{l}{d}$ has property \emph{(T)} with overwhelming probability.
\end{theoremalph}

Together with Gromov's theorem, this shows that for $d$ between $\frac{1}{3}$ and $\frac{1}{2}$ we obtain a multitude of infinite hyperbolic groups with property (T). On the other hand, it is known \cite{oll-wise} that for $d < \frac{1}{5}$ random groups in the density model do not have property (T). It is an open problem to decide if the density $\frac{1}{3}$ for the Gromov model can be improved or if it is critical for having property (T).

\.Zuk's key insight is the introduction of the triangular model, mentioned above, and the use of a certain spectral criterion for property (T), which relates this property to the spectrum of the discrete Laplacian on a suitably defined finite graph. It is then possible to pass to the original Gromov model, so that the task of proving property (T) is reduced to the analysis of eigenvalues of the Laplacian on random graphs.

Unfortunately, \.Zuk's original paper contains gaps --- the steps needed to pass from results about random graphs to the triangular model are not spelled out explicitly and are stated without proof. The passage from the triangular model to the Gromov density model is not mentioned there, although it can be done following ideas from the survey \cite{ollivier2005}. As Ollivier points out in his survey, each of these steps requires dealing with technical details, but the required proofs do not seem to appear anywhere in the literature. In particular the passage from the results about random regular graphs to the triangular model turns out to be nontrivial.

The goal of this paper is to fill the gaps from \.Zuk's argument and describe the passage to the Gromov model, thus giving a full and rigorous proof of Theorem \ref{th:theoremB}.  We introduce the appropriate definitions and properties of random group models and then proceed to prove in detail the facts needed to obtain the main theorem, relying on a strong result from random graph theory to deal with the major difficulty mentioned above. In addition we give an alternative way of passing to the triangular model, not relying on theorems about random regular graphs, but using more recent results about the so called Erd\H{o}s-Renyi model of random graphs.

The structure of the paper is as follows. In Section \ref{ch2} we introduce the models of random groups, property (T), the spectral criterion and give an outline of \.Zuk's approach to the proof. In Section \ref{ch3}, after recalling necessary notions from graph theory, we prove that random groups in the triangular model typically have property(T). We then use this result to finish the proof of the main theorem, proving that groups in the Gromov model are quotients of groups in the triangular model. Section \ref{ch4}, is devoted to the alternative proof of Theorem \ref{th:theoremA}, relying on methods from spectral graph theory and random graphs. 

\section*{Acknowledgements}\label{acknowledgements}

We would like to thank Piotr Przytycki for suggesting to us the topic of paper, his help and valuable suggestions.

\section{Preliminary notions}\label{ch2}

\subsection{Models of random groups}\label{section:models}

In this section, we define relevant models of random groups. A comprehensive survey on the topic of random groups is given in \cite{ollivier2005}. All groups we consider will be finitely generated.

Suppose we have a finite set of generators $\Ss = \{s_1,\dots, s_n\}$. Let $\R$ be a set of words chosen randomly according to some probability distribution on the set of all words on the elements of $\Ss$ and their inverses. A random group will be a group given by the presentation $\pres{\Ss}{\R}$. Choosing a model of random groups consists of specifying a probability law for the set of relators $\R$. Unless stated otherwise, we consider only relators $r$ which are {\bf cyclically reduced}, i.e.\ if $r = s_1\dots s_l$, then $s_i \neq s_{i+1}^{-1}$ for $1 \leq i \leq l$ (where we identify $s_{l+1}$ and $s_1$). Usually, we will be interested in asymptotic properties of a random group when some parameter, e.g. number of generators $n$ or the length of relators $l$, goes to infinity. 

Whenever numbers like $(2n-1)^{ld}$, denoting an integer quantity (e.g. number of relators), appear, they are understood to mean their integer parts, e.g. $\lfloor{(2n-1)^{ld}} \rfloor$.

The most widely studied model of random groups is the {\it Gromov density model}.

\begin{definition}\label{def:gromov-density}
Fix a parameter $d \in (0,1)$, called the relation {\bf density}. A group $\Gamma$ in the {\bf Gromov density model $\gromov{n}{l}{d}$} is given by $\Gamma = \pres{\Ss}{\R}$, where $\vert\Ss\vert = n$ and $\R$ is a set of $(2n-1)^{ld}$ relators, each of them chosen uniformly and independently from the set of relators of length $l$ (note that the same relator can appear several times).
\end{definition}

\begin{definition}\label{def:gromov-density-whp}
Fix $n$ and $d$. We say that a property $P$ of groups holds {\bf with overwhelming probability} (w.o.p.) in the Gromov density model if we have $\lim\limits_{l \rightarrow \infty}\mathbb{P}(\Gamma \in \gromov{n}{l}{d}\ \mathrm{satisfies}\ P) = 1$.
\end{definition}

We will sometimes relax the requirement that exactly $(2n-1)^{ld}$ relators are chosen, i.e. we allow choosing $f(n) \approx (2n - 1)^{ld}$ relators, where ''$\approx$'' means that $\frac{f(n)}{(2n - 1)^{ld}} \to 1$ as $n \to \infty$. It will be clear that the properties we consider do not depend on such details.

As mentioned in the introduction, for $d \geq \frac{1}{2}$ groups in the Gromov density model are trivial or $\mathbb{Z}_{2}$ w.o.p., so we will only be interested in values of $d < \frac{1}{2}$.

Another model of random groups, introduced {\it ad hoc} in \cite{zuk}, is the {\it triangular model}.

\begin{definition}\label{def:triangular}
As before, fix $d \in (0,1)$. A group $\Gamma$ in the {\bf triangular model $\triangular{m}{d}$} is given by $\Gamma = \pres{\Ss}{\R}$, where $\vert\Ss\vert = m$ and $\R$ is a set of $(2m-1)^{3d}$ relators, chosen uniformly and independently from the set of relators of length 3.
\end{definition}

Intuitively, a random group in the triangular model $\triangular{m}{d}$ should be related to a Gromov random group $\gromov{n}{l}{d}$ with $2m = (2n)^{\frac{l}{3}}$, since then the numbers of relators, $(2m-1)^{3d}$ and $(2n-1)^{ld}$, are approximately the same. Indeed, we will see in Chapter \ref{ch3} that every group in $\gromov{n}{l}{d}$ has a subgroup of finite index which is a homomorphic image of a group in the (slightly modified) triangular model, with $m$ as above.

\begin{definition}\label{def:triangular-whp}
For fixed $d$, we say that a property $P$ of groups holds {\bf with overwhelming probability} (w.o.p.) in the triangular model if we have $\lim\limits_{m \rightarrow \infty}\mathbb{P}(\Gamma \in \triangular{m}{d}\ \mathrm{satisfies}\ P) = 1$.
\end{definition}

We note that if $d < \frac{1}{2}$, in both models w.o.p each relator appears at most once \cite{ollivier2005}:

\begin{remark}\label{rem:no-duplicates}
If $d < \frac{1}{2}$, w.o.p. all relators in the presentation of a random group in the Gromov model $\gromov{n}{l}{d}$ or the triangular model $\triangular{m}{d}$ are distinct words.
\end{remark}

The last model we shall use is the {\it permutation model}.

\begin{definition}\label{def:permutation}
Fix a set of generators $\Ss=\{s_1, \dots, s_n\}$ of size $n$. Choose $v \geq 1$ and pick uniformly and independently $v$ pairs of permutations $\{\pi_1^{1}, \pi_2^{1}\}, \dots, \{\pi_1^{v}, \pi_2^{v}\}$, where each $\pi_k^{i}, \ k=1,2$, is chosen randomly from the set of all permutations of $\{s_1, \dots, s_n, s_1^{-1}, \dots, s_n^{-1}\}$. A random group $\Gamma$ in the {\bf permutation model} $\permutation{n}{v}$ is $\Gamma = \pres{\Ss}{\R}$, with $\R$ consisting of $2nv$ words of the form $s_j^{\pm 1} \pi_1^{i}(s_j^{\pm 1})\pi_2^{i}(s_j^{\pm 1})$, for $i=1,\dots,v$ and $j=1\dots,n$.  
\end{definition}

Note that we place no restrictions on the permutations $\pi_{k}^{i}$, so the resulting words \\ $s_j^{\pm 1} \pi_1^{i}(s_j^{\pm 1})\pi_2^{i}(s_j^{\pm 1})$  might not be cyclically reduced. 

The permutation model is directly related to the following notion of a random graph:

\begin{definition}\label{def:configuration}
Fix a set of vertices $V=\{1, \dots, n\}$ and choose at random $v$ permutations $\pi_1, \dots, \pi_v$  of $V$. Define the set of (undirected) edges between vertices as $E=\{(i, \pi_k(i)) : i=1,\dots,n, \ k=1,\dots,v\}$. We allow multiple edges between two vertices. A random graph in the {\bf configuration model} $\permgraph{n}{v}$ is the graph $G=(V,E)$.
\end{definition}

A graph in the configuration model $\permgraph{n}{v}$ is a random regular graph of degree $2v$.  Such graphs have been extensively studied (see for example \cite{luczak}) and their spectral properties will be helpful in analysing random groups appearing in the permutation model.

\begin{definition}\label{def:permutation-whp}
For fixed $v$, we say that a property $P$ of groups holds {\bf with overwhelming probability} (w.o.p.) in the  permutation model if we have $\lim\limits_{n \rightarrow \infty}\mathbb{P}(\Gamma \in \permutation{n}{v}\ \mathrm{satisfies}\ P) = 1$.
\end{definition}

\subsection{Property (T)}\label{section:property-t}

We will now define {\it Kazhdan's property (T)} and formulate some basic facts. Here, we are only concerned with discrete, finitely generated groups --- for a more complete treatment, including property (T) for general topological groups, see \cite{bekka2008kazhdan}.

Let $\Gamma$ be a finitely generated group with a finite generating set $\Ss$. Consider $\mathcal{H}$, a Hilbert space, and $\pi: \Gamma \rightarrow U(\mathcal{H})$, a unitary representation of $\Gamma$ on $\mathcal{H}$. We will say that $\pi$ has {\bf almost invariant vectors} if for every $\varepsilon > 0$ there exists $u_{\varepsilon} \in \mathcal{H}$ such that for every $s \in \Ss$ $\norm{\pi(s)u_{\varepsilon} - u_{\varepsilon}} < \varepsilon \norm{u_{\varepsilon}}$ (call such $u_{\varepsilon}$ {\bf $\varepsilon$-invariant}). A vector $u \in \mathcal{H}$ is called {\bf invariant} if $\pi(g)u=u$ for every $g \in \Gamma$.

\begin{definition}\label{def:property-t}
We say that $\Gamma$ has {\bf property (T)} if for every $\mathcal{H}$ and $\pi$ the following holds: if $\pi$ has almost invariant vectors, then $\pi$ has an invariant vector. 
\end{definition}

Before we proceed to give examples, we note the following simple property:

\begin{remark}\label{th:homomorphism}
Let $\phi: \Gamma \rightarrow H$ be an epimorphism. Then if $\Gamma$ has property \emph{(T)}, so does $H$.
\end{remark}
\begin{proof}
Let $\Ss$ be a generating set of $\Gamma$. Take $\phi(\Ss)$ to be a generating set of $H$. Every representation $\pi$ of $H$ can be pulled back to a representation $\pi' = \pi \circ \phi$ of $\Gamma$. Note that $\varepsilon$-invariant vectors of $\pi$ are also $\varepsilon$-invariant of $\pi'$, so if $\pi$ has almost invariant vectors, so does $\pi'$. Since $\Gamma$ has property (T), $\pi'$ has an invariant vector, which is also an invariant vector of $\pi$.
\end{proof}

An easy consequence of the above fact is that property (T) is preserved under adding relators --- if $\Gamma = \pres{\Ss}{\R}$ has property (T), then $\Gamma' =\pres{\Ss}{\R \cup \R'}$, obtained from $\Gamma$ by adding an arbitrary set of relators $\R'$, also has property (T). Therefore, to prove that a group $\Gamma = \pres{\Ss}{\R}$ has property (T), it suffices to find a subset $\R' \subseteq \R$ of relators such that $\Gamma' = \pres{\Ss}{\R'}$ has property (T).

Another useful fact is that property (T) is preserved when passing to a subgroup of finite index \cite[Theorem 1.7.1]{bekka2008kazhdan}:

\begin{remark}\label{rem:finite-t}
Let $H$ be a subgroup of finite index in $\Gamma$. Then $H$ has property (T) if and only if $\Gamma$ has property (T).
\end{remark}

We illustrate the definition of property (T) with a few examples:
\begin{itemize}
\item $\mathbb{Z}$ does not have property (T), which can be seen by analysing its representation on $L^2(\mathbb{R})$ by translations
\item from Remark \ref{th:homomorphism}, any group admitting an epimorphism onto $\mathbb{Z}$ does not have property (T) --- this includes $\mathbb{Z}^n$ and free groups $F_n$
\item lattices in certain Lie groups, including $SL_n(\mathbb{R})$ for $n \geq 3$ and $Sp_{2n}(\mathbb{R})$ for $n \geq 2$, have property (T) \cite[Section 1.4, 1.5]{bekka2008kazhdan}
\end{itemize}

\subsection{Spectral criterion for property (T)}\label{section:spectral-t}

Given a group and its presentation, it is in general difficult to decide whether the group has property (T) or not. Here we describe, following \cite{zuk}, a simple sufficient condition for having property (T), based on spectral properties of a certain finite graph associated with the group's presentation. 

Let $G = (V, E)$ be a graph with the vertex set $V$ and the set of edges $E$ (we allow multiple edges between two vertices). For a vertex $v$ we denote its degree by $\deg(v)$. For functions $f, g : V \to \mathbb{R}$  their inner product is defined as:
\[\langle f, g \rangle = \sum\limits_{v \in V}f(v)g(v)\deg(v)\]
We can then introduce the discrete Laplacian $\Delta$:
\[
(\Delta f)(v) = f(v) - \frac{1}{\deg(v)}\sum\limits_{v' \sim v} f(v')
\]
where $v' \sim v$ means that vertices $v'$ and $v$ are adjacent (if there are multiple edges between $v$ and $v'$, then $v'$ appears in the sum with the corresponding multiplicity).

The Laplacian $\Delta$ is a linear operator from $\ell^2(G)$ to $\ell^2(G)$. It is easily seen that it is non-negative and self-adjoint with respect to the inner product defined above and therefore has real spectrum. 

Constant functions are mapped by $\Delta$ to $0$, so $0$ is always an eigenvalue. If $\Gamma$ is connected, then it is a simple eigenvalue. We will denote the smallest nonzero eigenvalue of $\Delta$ by $\lambda_{1}(\Gamma)$. 

The graph $L(\Ss)$ below is the same as the graph $L'(S)$ from \.Zuk's paper:

\begin{definition}[{\cite[Section 7.1]{zuk}}]\label{def:graph-l'}
Let $\Gamma = \pres{\Ss}{\R}$, where $\Ss = \{ s_{1}, \ldots, s_{n} \}$. We define the graph $L(\Ss)$ as follows. The vertices of this graph are generators $s_{1}, \ldots, s_{n}$ and their inverses $s_{1}^{-1}, \ldots, s_{n}^{-1}$. For each relator in $\R$ of the form $s_{x}s_{y}s_{z}$ we introduce edges $({s}_{x}s_{y}^{-1}), (s_{y}s_{z}^{-1}), (s_{z}s_{x}^{-1})$.
\end{definition}

This definition allows having multiple edges between two distinct vertices, but does not allow loops from a vertex to itself (since we assume that relators are cyclically reduced). Note that only relators of length $3$ (corresponding to triangles in the Cayley graph of $\Gamma$ associated with the presentation) are relevant to the definition of $L(\Ss)$.

We will always assume that $L(\Ss)$ is connected. As noted in \.Zuk's paper, even if $L(\Ss)$ is not connected (which is the case, for example, for $\mathbb{Z}^2$ with standard generators), we can change the generating set, replacing $\Ss$ by $\Ss \cup \Ss^2$, and it is easily checked that the latter set gives a connected graph. Thus, every group has a generating set $\Ss$ for which $L(\Ss)$ is connected.

Intuitively, for a presentation consisting only of relators of length $3$, the graph $L(\Ss)$ is the vertex link of the triangular presentation 2-complex.

We can now formulate the theorem connecting spectral properties of the Laplacian and property (T).

\begin{theorem}[{\cite[Proposition 6]{zuk}}]\label{th:lambda-1/2}
Let $\Gamma$ be a group given by a presentation $\pres{\Ss}{\R}$ and assume that $L(\Ss)$ is connected. If $\lambda_{1}(L(\Ss)) > \frac{1}{2}$, then $\Gamma$ has property \emph{(T)}.
\end{theorem}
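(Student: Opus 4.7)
The plan is to prove the contrapositive: assume $\Gamma$ admits a unitary representation $\pi \colon \Gamma \to U(\mathcal{H})$ with no nonzero invariant vector, and produce a uniform Kazhdan constant for $\Ss$. For each $u \in \mathcal{H}$ I introduce the $\mathcal{H}$-valued $0$-cochain $\phi_u \colon V(L(\Ss)) \to \mathcal{H}$ on the link graph defined by $\phi_u(s) = \pi(s^{-1}) u$, extend the inner product and discrete Laplacian $\Delta$ coordinatewise, and apply the Poincar\'{e} inequality $\langle \Delta \phi_u, \phi_u\rangle \geq \lambda_1 \|\phi_u - (\phi_u)_0\|^2$, where $(\phi_u)_0$ is the orthogonal projection onto the subspace of constant $\mathcal{H}$-valued functions (the weighted mean of $\phi_u$).

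The heart of the argument is computing both sides in terms of the averaging operator $A := \frac{1}{2|E(L)|}\sum_{s \in \Ss^{\pm}} \deg(s)\,\pi(s^{-1})$. For each relator $s_x s_y s_z$ and each of its three contributed edges, say $(s_x, s_y^{-1})$, the relation $s_y^{-1}s_x^{-1} = s_z$ in $\Gamma$ together with unitarity of $\pi(s_y)$ give
\[ \|\phi_u(s_x) - \phi_u(s_y^{-1})\| = \|\pi(s_x^{-1})u - \pi(s_y) u\| = \|\pi(s_y)[\pi(s_z) u - u]\| = \|\pi(s_z)u - u\|, \]
and analogously the other two edges from the same relator contribute $\|\pi(s_x)u - u\|^2$ and $\|\pi(s_y)u - u\|^2$. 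Since each letter occurrence in a relator simultaneously raises $\deg(s)$ and $\deg(s^{-1})$ by one, $A$ is self-adjoint, and a short manipulation gives $\langle \Delta \phi_u, \phi_u \rangle = 2|E(L)|\,\langle (I - A) u, u\rangle$. Meanwhile $\|\phi_u\|^2 = 2|E(L)| \|u\|^2$ and one checks $(\phi_u)_0 = Au$ (as a constant $\mathcal{H}$-valued function), so $\|\phi_u - (\phi_u)_0\|^2 = 2|E(L)|\,\langle (I - A^2) u, u\rangle$.

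Substituting into the Poincar\'{e} inequality yields the operator inequality $\langle (I - A)u, u\rangle \geq \lambda_1 \langle (I - A)(I + A) u, u\rangle$ on $\mathcal{H}$, equivalently (by the spectral theorem) $f(A) \succeq 0$ for $f(\mu) = (1-\mu)\bigl(1 - \lambda_1(1 + \mu)\bigr)$. Since $\lambda_1 > 1/2$, this forces $\mathrm{spec}(A) \subseteq \bigl[-1,\; \tfrac{1}{\lambda_1}-1\bigr]\cup\{1\}$, so the point $1$, if it lies in the spectrum, is isolated and hence an eigenvalue of the self-adjoint operator $A$. But $Au = u$ expresses $u$ as a convex combination, with positive weights $\deg(s)/(2|E(L)|)$ summing to one, of the unit-norm vectors $\pi(s^{-1})u$; strict convexity of the Hilbert norm then forces $\pi(s^{-1})u = u$ for every $s \in \Ss^{\pm}$ with $\deg(s) > 0$, which by connectedness of $L(\Ss)$ means every $s \in \Ss$, contradicting the no-invariant-vector hypothesis. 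Hence $A \preceq (\tfrac{1}{\lambda_1}-1) I$, and combining this with the identity $\sum_{s \in \Ss^{\pm}} \deg(s)\|\pi(s) u - u\|^2 = 4|E(L)|\langle (I - A)u, u\rangle$ and the trivial bound by the maximum yields $\max_{s \in \Ss} \|\pi(s) u - u\|^2 \geq 2\bigl(2 - \tfrac{1}{\lambda_1}\bigr)\|u\|^2 > 0$, a uniform Kazhdan constant.

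The step I expect to be most delicate is the unitarity-and-relator simplification of the Dirichlet form in the second paragraph: one has to match each of the three edges of $L(\Ss)$ coming from a relator with the ``missing'' generator, and verify that summing over all relators (equivalently, over all letter occurrences) reproduces exactly the operator $I - A$ with the correct normalization and that $\deg(s) = \deg(s^{-1})$. Once this clean reformulation is in place, the passage through the spectral theorem to a Kazhdan constant is a one-line algebraic observation driven by the hypothesis $\lambda_1 > 1/2$.
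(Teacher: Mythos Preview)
The paper does not give its own proof of Theorem~\ref{th:lambda-1/2}; it is quoted as \cite[Proposition~6]{zuk} with a reference to the original source \cite{ballman}, so there is no in-paper argument to compare against. Your proof is correct and is essentially the standard argument one finds in those references: the link cochain $\phi_u(s)=\pi(s^{-1})u$, the relator identity reducing the Dirichlet energy to $2|E|\langle(I-A)u,u\rangle$, the Poincar\'e inequality giving $(1-\mu)\bigl(1-\lambda_1(1+\mu)\bigr)\ge 0$ on $\mathrm{spec}(A)$, and the strict-convexity step excluding the eigenvalue $1$, yielding the explicit Kazhdan constant $\sqrt{2(2-1/\lambda_1)}$. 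The bookkeeping you flag as delicate (matching each edge to the ``missing'' generator and the equality $\deg(s)=\deg(s^{-1})$) checks out exactly as you describe.
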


The theorem first appears in \cite{ballman} (see also \cite{zuk-fr}). It is usually formulated for a version of $L(\Ss)$ without multiple edges.

Property (T) and the above criterion are closely connected to the spectral properties of $G$-invariant random walks and existence of certain Poincar{\'e} inequalities on the Cayley graph of $\Gamma$. A good reference for this topic is \cite{oll-aut-t} (see also \cite{bekka2008kazhdan}). 

The following example shows that the constant $\frac{1}{2}$ cannot be improved \cite{zuk}. Take $\Gamma = \mathbb{Z}^2$ with the generating set $\Ss = \{ (0,1), (1,0), (1,1) \}$. The graph $L(\Ss)$ is a cycle of six vertices and it can be checked that the eigenvalues of $\Delta$ on this graph are $0, \frac{1}{2}, \frac{3}{2}, 2$. We then have $\lambda_{1}(L(\Ss)) = \frac{1}{2}$, but $\mathbb{Z}^2$ does not satisfy property (T).

Later on it will be also convenient to decompose $L(\Ss)$ into subgraphs, as in the following definition. 

\begin{definition}
Let $\Gamma = \pres{\Ss}{\R}$ be a group. We define graphs $L_{i}$, $i = 1, 2, 3$, such that each of them has the same vertex set as $L(\Ss)$ and for each relator of the form $s_{x}s_{y}s_{z}$ we insert and edge $(s_{x}, s_{y}^{-1})$ into $L_{1}$, $(s_{y}, s_{z}^{-1})$ into $L_{2}$ and $(s_{z}, s_{x}^{-1})$ into $L_{3}$.
\end{definition}

\subsection{Proofs of main theorems -- outline}\label{section:zuk-outline}

The main results we are concerned with in this paper are the following theorems:

\begin{theoremalph}\label{th:theoremA}
For density $d > \frac{1}{3}$, a random group in the triangular model $\triangular{m}{d}$ has property \emph{(T)} with overwhelming probability.
\end{theoremalph}

\begin{theoremalph}\label{th:theoremB}
For density $d > \frac{1}{3}$, a random group in the Gromov model $\gromov{n}{l}{d}$ has property \emph{(T)} with overwhelming probability.
\end{theoremalph}

Here we give an outline of \.Zuk's original argument concerning Theorem \ref{th:theoremA}, which is contained in \cite{zuk}, and point out gaps in the proof which need to be filled. Then we sketch how this can be used to deduce Theorem \ref{th:theoremB}.

The first step is to prove that w.o.p. a random group in the permutation model $\permutation{n}{v}$ satisfies property (T) (for a suitably chosen value of $v$). 

This is accomplished by invoking the spectral criterion. Namely, let $\Gamma = \pres{\Ss}{\R}$ be a random group in the $\permutation{n}{v}$ model. Consider the graph $L(\Ss)$ and the associated subgraphs $L_{1}$, $L_{2}$, $L_{3}$. It is readily proved, as in Żuk's paper \cite[Lemma 6]{zuk}, that if $\lambda_{1}(L_{i}) > \frac{1}{2}$ for $i = 1, 2, 3$, then $\lambda_{1}(L(\Ss)) > \frac{1}{2}$. We can then invoke Theorem \ref{th:lambda-1/2} to conclude that $\Gamma$ has property (T). To prove that each of $L_{i}$, say $L_{1}$, has $\lambda_{1} > \frac{1}{2}$, we observe that it is exactly a random graph in the configuration model $\permgraph{n}{v}$ (Definition \ref{def:configuration}) and such graphs typically have large second Laplacian eigenvalue, thanks to the following theorem by Friedman:

\begin{theorem}[{\cite[Theorem B]{friedman}}]\label{th:friedman}
Let $L$ be a random graph in the configuration model $\permgraph{n}{v}$. Then 
\[
\lim\limits_{n \to \infty}\mathbb{P}\left( \lambda_{1}(L) > 1 - \left( \frac{\sqrt{2v - 1}}{v} + \frac{\log v}{v} + \frac{c}{v} \right) \right) = 1
\]
where $c$ is some constant independent of $v$. 
\end{theorem}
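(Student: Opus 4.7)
The plan is to establish this via the moment (trace) method applied to the adjacency matrix of the random graph. Since a graph $L$ in $\permgraph{n}{v}$ is $2v$-regular, its Laplacian $\Delta$ and adjacency matrix $A$ satisfy $\Delta = I - \frac{1}{2v}A$, so the eigenvalues are related by $\lambda_i(\Delta) = 1 - \mu_i/(2v)$, where $\mu_1 = 2v > \mu_2 \geq \cdots$ are the adjacency eigenvalues. Thus the target bound $\lambda_1(L) > 1 - \bigl(\frac{\sqrt{2v-1}}{v} + \frac{\log v}{v} + \frac{c}{v}\bigr)$ is equivalent to the Ramanujan-type upper bound $\mu_2 < 2\sqrt{2v - 1} + 2\log v + 2c$, which is what I would actually prove w.o.p.

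The starting point is the expansion
\[
\mathbb{E}\bigl[\operatorname{Tr}(A^{2k})\bigr] = \sum_{\gamma} \mathbb{P}(\gamma \text{ traces a closed walk in } L),
\]
where $\gamma$ ranges over closed sequences of length $2k$ in $\{1,\dots,n\}$. Because edges in the configuration model arise from uniformly random permutations, the probability that a prescribed set of $j$ distinct directed edges is present factorizes to leading order as $n^{-j}$ per permutation involved. I would group walks by the combinatorial \emph{shape} of the multigraph they trace out: the dominant tree-like shapes are counted by Catalan-type asymptotics and contribute roughly $n \cdot (2\sqrt{2v-1})^{2k}$, while the top eigenvalue $\mu_1 = 2v$ gives the isolated $(2v)^{2k}$ term. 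Combining $\operatorname{Tr}(A^{2k}) \geq \mu_1^{2k} + (n-1)|\mu_2|^{2k}$ with Markov's inequality and the choice $k = k(n) \to \infty$ proportional to $\log n$ then upgrades the moment estimate to the desired high-probability bound on $\mu_2$.

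The hard part is controlling walks whose underlying multigraph contains small cycles --- so-called \emph{tangles}. A naive classification of shapes produces combinatorial factors polynomial in $v$ that would swamp the target $(2\sqrt{2v-1})^{2k}$ and spoil the sharp constant, leaving something far weaker than Alon--Boppana. Friedman's resolution, which I would follow, is to introduce \emph{certified traces} of the non-backtracking walk operator and restrict to \emph{tangle-free} walks: for tangle-free walks, refined non-backtracking combinatorics (essentially free-probability / Kesten--McKay calculations) yield the sharp leading constant $2\sqrt{2v-1}$, while walks crossing a tangle are shown to be exponentially rare in the permutation measure and absorbed into the $\frac{\log v}{v}$ slack. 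The technical heart is a delicate simultaneous choice of $k$ and tangle size: $k$ must grow fast enough to separate $\mu_2$ from $\mu_1$, yet slowly enough that the $n^{-j}$ factors dominate the number of walks realising each shape. Managing this balance, and verifying that tangle-free certified traces have the claimed moments to sub-polynomial error, is the principal obstacle, and the reason this result is deep enough to be invoked as a black box rather than reproved here.
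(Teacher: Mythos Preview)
The paper does not prove this theorem at all: it is quoted verbatim as \cite[Theorem~B]{friedman} and used as a black box, with only the one-line remark that Friedman's adjacency-eigenvalue bound translates immediately to the Laplacian via $\Delta = I - \tfrac{1}{2v}A$. Your own closing sentence already anticipates this, and that is the right call in the context of this paper.

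Your sketch is a fair high-level outline of how Friedman actually proves the result, so in that sense the proposal is correct as a description of the cited proof. One small inaccuracy: what you call ``certified traces of the non-backtracking walk operator'' mixes two distinct proofs. Friedman's original argument uses \emph{selective traces} and an asymptotic expansion in powers of $1/n$, together with a delicate analysis of tangles; the systematic use of the non-backtracking operator to reach the Ramanujan bound is Bordenave's later (and considerably shorter) proof. Either route would justify the statement, but if you are going to attribute a mechanism to Friedman specifically, it is the selective-trace machinery rather than the non-backtracking spectral approach.
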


The inequality in \cite{friedman} is formulated in terms of eigenvalues of the adjacency matrix, but it is straightforward to rewrite it as an inequality for the Laplacian eigenvalues.

We fix $v$ to be large enough so that the theorem above gives $\lambda_{1}(L_{i}) > \frac{1}{2}$ (actually we can have $\lambda_{1}(L_{i})$ arbitrarily close to $1$, as $1 - \left( \frac{\sqrt{2v - 1}}{v} + \frac{\log v}{v} + \frac{c}{v}\right) \rightarrow 1$ with $v \to \infty$). This concludes the first step.

The second step consists of showing that w.o.p. a random group in the triangular model for $d > \frac{1}{3}$ is a quotient of a random group in the permutation model. Since we know that random groups in the latter model w.o.p. have property (T) and property (T) is preserved under taking quotients, it will follow that w.o.p. a random group in the triangular model has property (T).

To this end we need to show that a typical presentation of a triangular group contains a subset of relators corresponding to pairs of permutations, as in the definition of $\permutation{n}{v}$. In \cite{zuk} this fact is stated without proof. Actually, this is equivalent to finding a {\it perfect matching} in a certain hypergraph closely related to $L(\Ss)$, which we will define in Chapter \ref{ch3}. We will prove there, using a theorem proved by Johansson, Kahn and Vu in \cite{kahnvu}, that indeed such a matching exists with high probability (although we should stress that the result from \cite{kahnvu} is highly nontrivial). This  gives us Theorem \ref{th:theoremA}.

To obtain Theorem \ref{th:theoremB}, we need to pass from the triangular model to the Gromov density model. As explained in \cite{ollivier2005}, the triangular model is typically ``less quotiented'' than the Gromov model, so we expect that groups in the latter model with high probability will be quotients of groups in the former. This is indeed the case, although only some of the details are provided in \cite{ollivier2005}. In Section \ref{triang2gromov} we prove the required result by constructing a homomorphism from a suitable modification of the triangular model to the density model (the modifications are needed to deal with technical issues involving nonreduced words).

This strategy of proof may be considered roundabout --- we are forced to look for a perfect matching in a random hypergraph, but only because of good spectral properties of random groups in the permutation model, i.e. we are only interested in the large eigenvalue the matching provides and not the matching itself. A more direct proof, avoiding combinatorial difficulties with finding matchings, might be desirable. We give such a proof in Section \ref{ch4}, where we prove directly in the triangular model (using known results about another model of random graphs and without resorting to permutations) that graphs $L_i$ typically have large $\lambda_1$.

\section{Proofs of main theorems - the triangular model and the Gromov model}\label{ch3}

\subsection{Basic notions of random graphs}\label{randomGraphs}

In this section we recall basic notions of random graph theory and introduce auxilliary definitions. We shall use standard asymptotic notation. For functions $f, g$, $f = O(g)$ means that $f(n) \leq C g(n)$ for some constant $C$, and similarly $f = \Omega(g)$ means that $f(n) \geq C g(n)$ for some $C$. $f = o(g)$ means that $\frac{f(n)}{g(n)} \to 0$ as $n \to \infty$ and $f = \omega(g)$ means that $\frac{f(n)}{g(n)} \to \infty$ as $n \to \infty$. Finally $f = \Theta(g(n))$ if both $f = O(g)$ and $f = \Omega(g)$ hold.

Fix a vertex set $V$ of size $n$. A {\bf $k$-hypergraph} $G=(V,E)$ with vertex set $V$ consists of a family of {\bf hyperedges} $E$, where each hyperedge $e \in E$ is a subset of $V$ of size $k$ (we allow repeated hyperedges). The case $k=2$ gives the usual notion of graph with edges. A hypergraph will be called $k$-partite if the vertex set $V$ can be partitioned into $k$ disjoint subsets, $V = V_1 \cup \dots \cup V_k$, such that each hyperedge is of the form $e=\{v_1, \dots, v_k\}$, $v_i \in V_i$ for $i=1, \dots, k$. A {\bf perfect matching} in a $k$-partite hypergraph is a set of disjoint edges that cover $V$.

One of the most widely used models of random graphs is the $G(n, p)$ model (also called the Erd\H{o}s-Renyi model), in which a random graph on $n$ vertices is obtained by taking each possible edge independently with probability $p$. The expected number of edges is $N=\binom{n}{2}p$. Typically we are interested in asymptotic properties of such graphs, when $n \to \infty$ and $p = p(n)$ depends on $n$. For given $p$, we say that a random graph has property $P$ {\bf with high probability} (w.h.p.) if $\lim\limits_{n \rightarrow \infty}\mathbb{P}(G \in G(n,p)\ \mathrm{satisfies}\ P) = 1$. A common phenomenon in $G(n,p)$ model is the existence of sharp thresholds, where for a given graph property $P$ there exists a threshold probability $p(n)$ such that $\lim\limits_{n \rightarrow \infty}\mathbb{P}(G \in G(n,p)\ \mathrm{satisfies}\ P) = 0$ if $p = o(p(n))$, but $\lim\limits_{n \rightarrow \infty}\mathbb{P}(G \in G(n,p)\ \mathrm{satisfies}\ P) = 1$ for $p = \omega(p(n))$.

A related model is the $G(n, M)$ model, where we choose uniformly at random a set of edges from all possible sets of edges of size $M$ on $n$ vertices. This is somewhat similar to the $G(n, p)$ model with $p$ such that $M=\binom{n}{2}p$ (so that the expected number of edges agrees) and sometimes results about one model can be transferred to the other \cite{luczak}. Both definitions are straightforwardly generalized to the case of hypergraphs. 

For technical reasons it will be also convenient to define two slightly different models of random hypergraphs. The motivation for introducing them will be clear from the next section.

\begin{definition}
Fix $n$ and $M$. Let $V$ be a set of size $2n$ whose elements are labeled by $s_{1}, \ldots, s_{n}, s^{-1}_{1}, \ldots, s^{-1}_{n}$. A random graph $G$ in the $\tripartite{n}{M}$ is a $3$-partite hypergraph obtained in the following way. The vertex set of $G$ consists of three disjoint copies $V_1$, $V_2$, $V_3$ of $V$. We choose uniformly and independently a set of $M$ hyperedges, where each hyperedge is a triple $(s_{x}, \, s_{y}, \, s_{z})$, $s_{x} \in V_{1}, s_{y} \in V_{2}, s_{z} \in V_{3}$.
\end{definition}

Let $e$ = $(s_{x}, \, s_{y}, \, s_{z})$ be an edge as in the previous definition. We will call $e$ a {\bf reduced edge} if the corresponding word $s_{x}s_{y}s_{z}$ is cyclically reduced. A {\bf reduced perfect matching} is a perfect matching whose all hyperedges are reduced.

\begin{definition}
Fix $n$ and $M$. A random graph $G$ in the $\tripartiter{n}{M}$ model is a $3$-partite hypergraph obtained in the same way as in the $\tripartite{n}{M}$ model, with the restriction that we choose edges only from the set of reduced edges. The notion of satisfying a property with high probability is defined in the same way as in the previous models.
\end{definition}

\subsection{From the permutation model to the triangular model}\label{perm2triang}

In this section we will describe how to pass from the permutation model to the triangular model --- roughly, we will prove that a random triangular group is a quotient of a random group in the permutation model. Our first step will be establishing the existence of a perfect matching in a random hypergraph associated with a random group in the $\triangular{m}{d}$ model. Then, after a few technical steps, we will apply this analysis to the $\triangular{m}{d}$ triangular model and a variant of the $\permutation{n}{v}$ permutation model. This will enable us to prove Theorem \ref{th:theoremA}.

Recall that a random group $\Gamma = \pres{\Ss}{\R}$ in the triangular model $\triangular{m}{d}$ is obtained by choosing independently $(2m-1)^{3d}$ relators of the form $s_{x}s_{y}s_{z}$. From now on we take $d > \frac{1}{3}$ so that $3d = 1 + \varepsilon$, $\varepsilon > 0$. As we want to pass from this model to the permutation model, we focus our attention on finding a subset of relators of the form $s^{\pm 1}\pi_1(s^{\pm 1})\pi_2(s^{\pm 1})$ for $s \in \Ss$, where $\pi_i$ is a permutation. Each $s$ should appear in these relators three times (playing the role of $s$, $\pi_1(s')$ and $\pi_2(s'')$ for some $s', s''$). 

The requirement that $\R$ contains a subset of relators of the desired form is naturally rephrased in terms of the following $3$-hypergraph associated to the presentation.

\begin{definition}
Let $\Gamma = \pres{\Ss}{\R}$ be a group in the $\triangular{m}{d}$ model. We define a graph $\hypergraph{\Ss}$ in the following way: take the vertex set to be $V_1 \cup V_2 \cup V_3$, where $V_i$ are disjoint copies of $\Ss \cup \Ss^{-1}$. For a relator $s_{x}s_{y}s_{z} \in \R$ insert a hyperedge $(s_x, s_y, s_z), s_x \in V_1, s_y \in V_2, s_z \in V_3$ (we remove duplicate edges).
\end{definition}

Choosing a random group in the triangular model $\triangular{m}{d}$ gives us a random graph $\hypergraph{\Ss}$ in the $\tripartiter{m}{M}$ model for $M = (2m - 1)^{3d} = (2m - 1)^{1 + \varepsilon}$ (note that both models allow only reduced relators and edges, respectively). To find a subset of relators of the form $s^{\pm 1}\pi_1(s^{\pm 1})\pi_2(s^{\pm 1})$, for $s \in \Ss$, it is enough to find a perfect matching in $\hypergraph{\Ss}$.

In principle, models $\tripartite{n}{M}$ and $\tripartiter{n}{M}$ allow duplicate edges. However, we will use these graphs in the setting of random groups with density $< \frac{1}{2}$, where with overwhelming probability there are no duplicate relators (Remark \ref{rem:no-duplicates}). We can therefore assume that no duplicate edges will occur in random graphs described above. 

The key result upon which our proof is based is the following result from random graph theory \cite[Corollary 2.6]{kahnvu}:

\begin{theorem}\label{th:kahn}
Let $p(n) = \Omega(\frac{\log n}{n^2})$. Then a random 3-partite 3-hypergraph in $G_3(n, M)$ for $M = \Omega(n^3 p)$ contains a perfect matching with high probability.
\end{theorem}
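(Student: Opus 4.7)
The plan is to follow the entropy-based strategy of Johansson, Kahn, and Vu from \cite{kahnvu}, specialized to the tripartite setting. This is the natural approach because classical moment methods hit a well-known obstruction: the naive second-moment calculation fails at the threshold density due to strong correlations between perfect matchings sharing many edges (the standard ``clustering'' phenomenon for random constraint satisfaction problems). Any proof at this density must somehow bypass that barrier.

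First, by monotone coupling I would reduce to the threshold case $p = C \log n / n^2$ for a sufficiently large absolute constant $C$. Adding more hyperedges can only make a perfect matching more likely, so if the claim holds at this density with some prescribed probability, it holds for all $p = \Omega(\log n / n^2)$. Next I would carry out the first-moment calculation: the expected number of perfect matchings in $\tripartite{n}{M}$ with $M = \Omega(n^3 p)$ is, by a standard counting argument, exponentially large in $n$. Thus the obstacle is concentration, not existence in expectation.

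The core of the argument is an entropy/coupling step comparing two distributions on pairs $(G, M)$, where $G$ is a $3$-partite $3$-hypergraph and $M \subseteq G$ is a perfect matching: (i) sample $G$ from $\tripartite{n}{M}$ and let $M$ be a uniformly random perfect matching of $G$ (when one exists), versus (ii) first sample a uniformly random perfect matching $M$ on the tripartite vertex set, and then sprinkle the remaining $\approx M - n$ hyperedges independently and uniformly. A careful computation of conditional entropies shows that these two distributions are close in total variation; in particular, the probability that $G$ admits any perfect matching, which is the marginal probability that distribution (i) is nonempty, tends to $1$.

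The main obstacle will be the entropy/spread step, which in \cite{kahnvu} is handled by a delicate inductive argument showing that a random perfect matching in this regime does not concentrate on any small collection of hyperedges — a ``spread'' property that is precisely what makes the coupling in the previous paragraph effective. Adapting the general $3$-uniform argument of \cite{kahnvu} to the $3$-partite model requires only cosmetic modifications of the counting identities (the partition structure slightly changes the combinatorial factors but preserves the entropy estimates), so I do not anticipate new ideas being needed beyond those already developed for Shamir's problem.
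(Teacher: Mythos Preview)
The paper does not prove this theorem. It is quoted as \cite[Corollary 2.6]{kahnvu} and treated as a black box; the only additional remark is that the result in \cite{kahnvu} is stated for ordinary (non-partite) $3$-uniform hypergraphs and that ``the proof requires essentially no modifications to accommodate this change.'' So there is nothing to compare your argument to: your proposal to actually run the Johansson--Kahn--Vu machinery in the tripartite setting is strictly more than what the paper undertakes.

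That said, two comments on the sketch itself. First, your outline (monotone reduction to the threshold, first-moment lower bound, then an entropy/spread mechanism to defeat the second-moment obstruction) is indeed the shape of the JKV argument, and your final remark that the tripartite adaptation is cosmetic matches exactly what the paper asserts. Second, the description of the core step as exhibiting two distributions on pairs $(G,M)$ that are ``close in total variation'' is not an accurate summary of how \cite{kahnvu} actually proceeds: their argument estimates the entropy of the random matching conditioned on the hypergraph and uses this to control the probability of nonexistence, rather than establishing a TV-closeness between planted and unplanted models. If you intend to write out a genuine proof rather than a citation, you should follow the actual entropy bookkeeping of \cite{kahnvu} (or, alternatively, the more recent spread/fractional-expectation framework), not the coupling you describe.
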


The theorem in \cite{kahnvu} is actually proved for ordinary, not $k$-partite, hypergraphs, but the proof requires essentially no modifications to acommodate this change.

Theorem \ref{th:kahn} is a special case of a more general theorem in \cite{kahnvu}, concerning thresholds for existence of {\it balanced factors} in random graphs. In particular, it solves the long-standing ``Shamir problem'', asking for the threshold probability for existence of a perfect matching in a hypergraph. The proof is rather difficult, employing concentration inequalities and entropy estimates as well as intricate combinatorial arguments.

This is in sharp contrast with the case of bipartite graphs, where a simple argument based on Hall's marriage theorem gives a threshold probability of $p=\frac{\log n}{n}$ \cite[Corollary 7.13]{bollobas2001random}. The lack of a Hall-type theorem for hypergraphs, giving necessary and sufficient conditions for a perfect matching, makes generalizing that argument to hypergraphs unfeasible. Indeed, the problem of finding perfect matchings even in 3-hypergraphs is NP-hard, so it is unlikely that a simple Hall-type criterion exists. There are several partial criteria for matchings in hypergraphs (\cite{haxell}, \cite{haxell2}), but none of them seems sufficient to establish our result.

Easier arguments can be given \cite[Theorem 4.19]{luczak} to obtain a matching as soon as $M n^{-\frac{4}{3}} \rightarrow \infty$. One first conditions the graph on the sequence of degrees of its vertices and then applies the second moment method. However, this is too weak for our purposes, since it would only give Theorem \ref{th:theoremA} for densities $d > \frac{4}{9}$.

As an immediate corollary of Theorem \ref{th:kahn}, we obtain:

\begin{corollary}\label{th:nonred-matching}
With high probability a random graph in the $\tripartite{m}{M}$ model for $M = (2m - 1)^{1 + \varepsilon}$ contains a perfect matching.
\end{corollary}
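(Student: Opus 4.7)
The proof should be essentially a direct application of Theorem \ref{th:kahn}, so the plan is just to match up parameters and verify the hypotheses. In the statement of Theorem \ref{th:kahn}, the parameter $n$ refers to the size of each part of the tripartite hypergraph, and in the setting of Corollary \ref{th:nonred-matching} each part $V_i$ is a copy of $\mathcal{S} \cup \mathcal{S}^{-1}$, which has size $2m$. So the first step is to set $n := 2m$ in the application.

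Next, I would choose $p = p(m)$ so as to make both hypotheses of Theorem \ref{th:kahn} hold simultaneously with the given value $M = (2m - 1)^{1 + \varepsilon}$. The natural choice is $p := M/n^3 = (2m - 1)^{1 + \varepsilon}/(2m)^3$, which is $\Theta(m^{-2 + \varepsilon})$. With this choice, the condition $M = \Omega(n^3 p)$ holds trivially (with constant $1$). For the other condition, $p(n) = \Omega(\log n / n^2)$, it suffices to observe that $m^{-2 + \varepsilon}$ dominates $\log(2m)/(2m)^2$ as $m \to \infty$, since $m^{\varepsilon}/\log m \to \infty$ for any fixed $\varepsilon > 0$.

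Once both hypotheses are verified, Theorem \ref{th:kahn} yields that a random graph in $\tripartite{m}{M}$ contains a perfect matching with high probability, which is exactly the statement of the corollary. There is no real obstacle here; the only thing to be slightly careful about is that Theorem \ref{th:kahn} is stated for the tripartite hypergraph model, which is precisely the definition of $\tripartite{m}{M}$ given earlier, so no translation between models is required. Note also that the corollary concerns $\tripartite{m}{M}$ and not $\tripartiter{m}{M}$, so the restriction to reduced edges (which will matter in subsequent sections) does not enter at this stage.
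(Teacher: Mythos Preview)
Your proposal is correct and matches the paper's approach: the paper simply states that the corollary follows immediately from Theorem~\ref{th:kahn}, without giving any further argument, and your parameter-matching is exactly the routine verification needed. One minor notational point: in the paper's definition of $G_3(n,M)$ each part already has size $2n$, so the identification is really $n=m$ rather than $n=2m$, but this factor of $2$ is absorbed by the $\Omega(\cdot)$ in both hypotheses and does not affect the argument.
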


Since we are interested in finding a matching in the reduced graph $\tripartiter{m}{M}$, we first need to establish the following fact:

\begin{corollary}\label{lm:derangements}
A random graph in the $\tripartite{m}{M}$ model, for $M = (2m - 1)^{1 + \varepsilon}$, contains a reduced perfect matching with high probability.
\end{corollary}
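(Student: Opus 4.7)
The plan is to obtain the reduced matching by restricting attention to the reduced sub-hypergraph $G^{\text{red}}$ of a random $G \in \tripartite{m}{M}$: any perfect matching of $G^{\text{red}}$ is automatically a reduced perfect matching of $G$. So it suffices to show that $G^{\text{red}}$ itself contains a perfect matching with high probability.

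First I would control the number of reduced edges. A uniformly random triple $(s_x,s_y,s_z) \in V_1 \times V_2 \times V_3$ fails to be reduced only when one of $s_y = s_x^{-1}$, $s_z = s_y^{-1}$, $s_x = s_z^{-1}$ occurs, and each of these events has probability $1/(2m)$; by the union bound a random edge is non-reduced with probability at most $3/(2m)$. Hence the expected number of non-reduced edges of $G$ is $O(M/m) = O(m^{\varepsilon})$, and a Chernoff-type bound gives $M_r := |E(G^{\text{red}})| \geq M/2 = \Omega(m^{1+\varepsilon})$ with high probability. Moreover, because $G$ is a uniformly random $M$-subset of triples, conditional on the set of non-reduced edges chosen the remaining reduced edges form a uniformly random $M_r$-subset of reduced triples; in other words, $G^{\text{red}}$ has the distribution of a random graph in $\tripartiter{m}{M_r}$.

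The matter is thereby reduced to showing that $\tripartiter{m}{M'}$ contains a perfect matching with high probability for any $M' \geq (2m-1)^{1+\varepsilon/2}$. This is the main obstacle, since Theorem \ref{th:kahn} as stated concerns only the unrestricted model. To close the gap I would verify that the argument of \cite{kahnvu} carries over essentially verbatim to the reduced setting: the admissible edge set differs from the complete tripartite hypergraph $K_{2m,2m,2m}$ by only $O(m^2) = o(m^3)$ triples, it remains near-regular in each coordinate, and the concentration and entropy estimates used in \cite{kahnvu} depend only on such quasi-regularity properties. Combining this variant of Theorem \ref{th:kahn} with the distributional identity from the previous paragraph yields the corollary.
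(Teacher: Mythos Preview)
Your route is genuinely different from the paper's, and it carries a real gap.

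\textbf{What the paper does.} The paper uses Theorem~\ref{th:kahn} only as a black box for the \emph{unrestricted} model. It then argues as follows: a perfect matching is a pair of permutations $(\pi_1,\pi_2)$, and the proportion of such pairs that are reduced is bounded below by a constant (a derangement/m\'enage count). Since $M=(2m-1)^{1+\varepsilon}$ is far above the Kahn--Vu threshold $\Theta(m\log m)$, one partitions the edge set into $f(m)\to\infty$ disjoint chunks each of threshold size; each chunk independently contains a perfect matching with high probability, and by a relabeling/symmetry argument that matching is reduced with probability bounded below by a constant. The probability that \emph{no} chunk yields a reduced matching is therefore at most $(1-p')^{f(m)}\to 0$.

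\textbf{What you do instead.} You pass to $G^{\mathrm{red}}$, observe it is distributed as $\tripartiter{m}{M_r}$, and then try to prove directly that $\tripartiter{m}{M'}$ has a perfect matching whp. In the paper's logic this is exactly Corollary~\ref{th:reduced-matching}, which is \emph{deduced from} the present corollary, not the other way round. Your reduction step (discard non-reduced edges, identify the conditional law) is in fact the content of the paper's proof of Corollary~\ref{th:reduced-matching}, run in reverse.

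\textbf{The gap.} Your argument therefore rests entirely on the claim that the Johansson--Kahn--Vu proof ``carries over essentially verbatim'' to the reduced edge set. You have not verified this; you have asserted it. The paper itself describes that proof as ``rather difficult, employing concentration inequalities and entropy estimates as well as intricate combinatorial arguments,'' and the whole design of the paper's argument --- the derangement count and the partition-into-chunks amplification --- is precisely to avoid reopening that proof. Saying the forbidden triples number only $O(m^2)$ and the host hypergraph remains near-regular is a plausibility argument, not a proof: one would have to check that every step (the coupon-collector lower bound, the entropy upper bound on the number of almost-perfect matchings, the iterative exposure, etc.) is insensitive to removing a $\Theta(1/m)$-fraction of triples in a structured way. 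That may well be true, but it is exactly the kind of ``highly nontrivial'' verification the paper declines to undertake. If you want to go this route, you must actually carry it out; otherwise the paper's derangement/partition argument is the self-contained way to get Corollary~\ref{lm:derangements} from Theorem~\ref{th:kahn} as stated.
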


\begin{proof}
Let $G$ be a random graph in $\tripartite{m}{M}$ model. A perfect matching corresponds to a pair of permutations $\pi_{1}$, $\pi_{2}$ such that the edges of the matching are of the form $(s, \pi_{1}(s), \pi_{2}(s))$, $s \in \Ss \cup \Ss^{-1}$. Being a reduced matching imposes the additional conditions $s \neq \pi_{1}(s)^{-1}$, $\pi_{1}(s) \neq \pi_{2}(s)^{-1}$ and $\pi_{2}(s) \neq s^{-1}$, for each $s$. Therefore reduced perfect matchings correspond to pairs $\pi_{1}$, $\pi_{2}$ such that, after composing with an involution, $\pi_{1}$ is a {\it derangement} (a permutation without a fixed point), and, once $\pi_{1}$ is chosen, $\pi_2$ is subject to the constraints $\pi_2(s) \neq s^{-1}, \pi_1(s) \neq \pi_2(s)^{-1}$, which means that for each $s$ it has now to avoid two values.

It is well known that as $m \rightarrow \infty$, probability that a random permutation is a derangement approaches $\frac{1}{e}$. Furthermore, it can be shown that permutations with two forbidden values, as in the case of $\pi_2$, also asymptotically, as $m \rightarrow \infty$, form a constant fraction of all permutations (this is equivalent to the so called {\it m\'enage problem} and the constant is $\frac{1}{e^2}$ \cite{menage}). Therefore, by taking $m$ large enough, we can assume that for a random pair of permutations $\pi_{1}$, $\pi_{2}$ the probability of satisfying the conditions for a reduced matching is bounded from below by a constant $p$ independent of $m$.

In our model, the number of edges, of order $m^{1 + \varepsilon}$, is asymptotically greater than the threshold value from Theorem \ref{th:kahn}. Since the threshold number of edges is only $m \log m$, we can easily partition the edge set into $f(m)=C\frac{m^{\varepsilon}}{\log m}$ sets of size $m\log m$, $f(m) \rightarrow +\infty$. In each such set we have a perfect matching with high probability, and since probability that a perfect matching is reduced is $\geq p$, we have a perfect reduced matching with some probability $p' > 0$, independent of $m$. The probability that there is no perfect reduced matching is therefore less than $(1-p')^{f(m)}$, which goes to $0$ as $m \rightarrow \infty$.

\end{proof}

Now we are ready to find a perfect matching in the reduced graph model:

\begin{corollary}\label{th:reduced-matching}
A random graph in the $\tripartiter{m}{M}$ model, for $M = (2m - 1)^{1 + \varepsilon}$, contains a (necessarily reduced) perfect matching with high probability.
\end{corollary}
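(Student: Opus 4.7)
The plan is to derive the corollary from Corollary \ref{lm:derangements} by a simple conditioning/coupling argument. The key observation is that only $O(m^2)$ of the $(2m)^3$ possible hyperedges fail to be cyclically reduced, so a uniformly random edge in the $\tripartite{m}{\cdot}$ model is reduced with probability $1 - O(1/m)$; the reduced-edge sub-multigraph of a $\tripartite{m}{M}$ random graph should therefore look very similar to a $\tripartiter{m}{M}$ random graph.

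Concretely, I would sample $G \sim \tripartite{m}{M}$ by drawing $M$ edges i.i.d.\ uniformly from all hyperedges, let $X$ be the number of non-reduced edges drawn, and let $R(G)$ be the sub-multiset of reduced edges, so $|R(G)| = M - X$. By the independence of the draws, conditional on $X = k$ the multiset $R(G)$ is distributed exactly as a random graph in the $\tripartiter{m}{M-k}$ model. Corollary \ref{lm:derangements} asserts that $G$ contains a reduced perfect matching with high probability, which is the same as saying that $R(G)$ contains a perfect matching.

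The last ingredient is monotonicity: for $L \leq M$ one can couple $\tripartiter{m}{L} \subseteq \tripartiter{m}{M}$ by adjoining $M - L$ further i.i.d.\ uniform reduced edges, so the quantity $p(L) := \mathbb{P}[\tripartiter{m}{L} \text{ contains a perfect matching}]$ is non-decreasing in $L$. Combining this with the conditioning identity and the trivial bound $|R(G)| \leq M$ gives
\[
p(M) \;\geq\; \sum_{k=0}^{M} \mathbb{P}[X = k]\, p(M-k) \;=\; \mathbb{P}[R(G) \text{ contains a perfect matching}] \;\longrightarrow\; 1,
\]
and any perfect matching in $\tripartiter{m}{M}$ is automatically reduced since every edge of the model is reduced.

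The only real obstacle is to set up the coupling between the two models carefully enough that the conditioning step is clean; no separate tail bound on $X$ is needed, since we just exploit the trivial inequality $|R(G)| \leq M$ together with the monotonicity of the matching property.
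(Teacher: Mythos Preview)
Your argument is correct and is essentially identical to the paper's own proof: both condition on the number of reduced edges in a $\tripartite{m}{M}$ sample, identify the reduced-edge subgraph with a $\tripartiter{m}{M'}$ sample, invoke monotonicity of the matching probability in the number of edges, and take the total probability to conclude $\mathbb{P}^{red}_{M}(F) \geq \mathbb{P}_{M}(F) \to 1$. The only cosmetic difference is that you phrase the conditioning via $X=k$ and spell out the coupling for monotonicity, while the paper conditions on $E_{M'}$ and asserts monotonicity directly.
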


\begin{proof}
From Corollary \ref{lm:derangements} it follows that a random graph in the $\tripartite{m}{M}$ model, for $M = (2m - 1)^{1 + \varepsilon}$, contains a reduced perfect matching with high probability. Let $\mathbb{P}_M(F)$ denote the probability that a random graph in $\tripartite{m}{M}$ contains such a matching, $\mathbb{P}^{red}_M(F)$ --- the same probability for $\tripartiter{m}{M}$ and let $E_{M'}$ be the event that a random graph in $\tripartite{m}{M}$ contains exactly $M'$ reduced edges, $M' \leq M$.

Now, consider the following procedure --- we choose a random graph $G$ in the $\tripartite{m}{M}$ model and remove nonreduced edges, obtaining a graph $G'$. Fixing any $M' \leq M$ and conditioning on existence of exactly $M'$ reduced edges in $G$ gives us $G'$ that is a random graph in $\tripartiter{m}{M'}$. $G$ contains a reduced matching if and only if $G'$ does. Since $M'$ is now fixed, the distribution of nonreduced edges in $G$ is independent of the existence of a reduced matching, so $\mathbb{P}_M(F|E_{M'}) = \mathbb{P}^{red}_{M'}(F)$. $G'$ is a random graph in $\tripartiter{m}{M'}$ and $M' \leq M$, so $\mathbb{P}^{red}_{M'}(F) \leq \mathbb{P}^{red}_{M}(F)$. Thus, we obtain $\mathbb{P}_M(F|E_{M'}) \leq \mathbb{P}^{red}_{M}(F)$ and since this is true for every $M'$, upon taking the total distribution $\mathbb{P}_M(F)$ we obtain $\mathbb{P}_M(F) \leq \mathbb{P}^{red}_{M}(F)$. Therefore a random graph in the $\tripartiter{m}{M}$ model contains a perfect matching with high probability.
\end{proof}

We can now formally state and prove the main result of this section. Since relators in the triangular model are reduced, it is necessary to replace the usual permutation model $\permutation{n}{v}$ with the reduced model $\permutationr{n}{v}$:

\begin{definition}
The {\bf reduced permutation model}, denoted by $\permutationr{n}{v}$, is the random group model defined analogously to $\permutation{n}{v}$ (Definition \ref{def:permutation}), but we allow only relators which are cyclically reduced, i.e. only permutations $\{\pi_1, \pi_2\}$ with $\pi_1(s) \neq s^{-1}, \pi_1(s) \neq \pi_2(s)^{-1}, \pi_2(s) \neq s^{-1}$ for every $s \in \Ss \cup \Ss^{-1}$  are admissible. 
\end{definition}

\begin{definition}\label{def:configuration-red}
The random graph model $\permrgraph{n}{v}$ (compare to Definition \ref{def:configuration}) is a random graph on a vertex set $V, |V|=2n,$ labelled by $\{s_1, \dots, s_n, s_1^{-1}, \dots, s_n^{-1}\}$, which is constructed by choosing at random $v$ permutations $\pi_1, \dots, \pi_v$ for which $\pi_k(s) \neq s^{-1}$ for every $s \in V$. The edge set $E$ is defined as $E = \{(s, \pi_k(s)) : s \in V, k=1,\dots, v\}$.
\end{definition}

For $\permutationr{n}{v}$, it is easy to see that if we define graphs $L(S)$ and $L_1, L_2, L_3$ as in Section \ref{section:zuk-outline}, each $L_i$ will be a random graph in $\permrgraph{n}{v}$.

We know from Theorem \ref{th:friedman} that for $v$ large enough a random graph $L$ in $\permgraph{n}{v}$ has $\lambda_1(L)> \frac{1}{2}$ with high probability. The same holds for $\permrgraph{n}{v}$:

\begin{lemma}\label{lm:friedman-reduced}
For $v$ large enough, a random graph $L$ in $\permrgraph{n}{v}$ is connected and has $\lambda_1(L)> \frac{1}{2}$ with high probability. 
\end{lemma}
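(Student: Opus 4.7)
The plan is to deduce Lemma \ref{lm:friedman-reduced} from Theorem \ref{th:friedman} by a simple conditioning argument. The key observation is that $\permrgraph{n}{v}$ is exactly $\permgraph{n}{v}$ conditioned on the event $A_{n,v}$ that each of the $v$ sampled permutations $\pi_k$ satisfies $\pi_k(s) \neq s^{-1}$ for every $s \in V$. Both models draw $v$ permutations of $V$ uniformly and independently; the reduced model only restricts the sample space.

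First I would bound $\mathbb{P}(A_{n,v})$ from below by a positive constant independent of $n$. Composing $\pi_k$ with the fixed-point-free involution $\iota: s \mapsto s^{-1}$ on $V$ turns the requirement $\pi_k(s) \neq s^{-1}$ into the requirement that $\iota \circ \pi_k$ be a \emph{derangement} of $V$. Since the fraction of derangements among all permutations of a set of size $2n$ converges to $1/e$ as $n \to \infty$, and the permutations $\pi_1, \ldots, \pi_v$ are independent, for every fixed $v$ and for all $n$ large enough we have $\mathbb{P}(A_{n,v}) \geq c(v)$ for some constant $c(v) > 0$ (one may take $c(v) = (2e)^{-v}$, but the exact value is irrelevant).

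Now fix $v$ large enough so that Theorem \ref{th:friedman} guarantees $\mathbb{P}(\lambda_1(L) > \tfrac{1}{2}) \to 1$ for $L$ drawn from $\permgraph{n}{v}$, and let $B$ denote the bad event $\{\lambda_1(L) \leq \tfrac{1}{2}\}$. Writing the distribution of $\permrgraph{n}{v}$ as the conditional distribution of $\permgraph{n}{v}$ given $A_{n,v}$, we obtain
\[
\mathbb{P}\bigl(B \text{ in } \permrgraph{n}{v}\bigr) \;=\; \mathbb{P}(B \mid A_{n,v}) \;\leq\; \frac{\mathbb{P}(B)}{\mathbb{P}(A_{n,v})} \;\leq\; \frac{\mathbb{P}(B)}{c(v)},
\]
which tends to $0$ by Theorem \ref{th:friedman}. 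Hence $\lambda_1(L) > \tfrac{1}{2}$ with high probability in $\permrgraph{n}{v}$. Connectedness comes for free: since $0$ is a simple eigenvalue of the discrete Laplacian precisely when the graph is connected, $\lambda_1(L) > 0$ already forces $L$ to be connected.

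I do not foresee a substantial obstacle in this argument; the whole step is essentially a soft conditioning on a constant-probability event, grafted onto Friedman's theorem. The only point that deserves care is verifying that the derangement probability is indeed bounded away from $0$ uniformly in $n$, which is the classical enumeration of derangements (and, if one wanted sharper constants, a small adaptation of the ménage-type computation used in Corollary \ref{lm:derangements}).
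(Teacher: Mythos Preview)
Your proposal is correct and follows essentially the same route as the paper: both recognise that $\permrgraph{n}{v}$ is $\permgraph{n}{v}$ conditioned on an event of probability bounded below by a positive constant (via the derangement count), and then transfer Friedman's theorem through the elementary inequality $\mathbb{P}(B\mid A)\le \mathbb{P}(B)/\mathbb{P}(A)$. The only small divergence is connectedness: the paper cites a separate fact that random regular graphs are connected w.h.p., while you extract it from the spectral gap---which is fine, but note that under the paper's convention $\lambda_1$ is the smallest \emph{nonzero} eigenvalue, so ``$\lambda_1>0\Rightarrow$ connected'' is vacuous; what you really need (and what Friedman actually bounds) is the \emph{second-smallest} Laplacian eigenvalue, so you should phrase that step accordingly.
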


\begin{proof}
As in the proof of Lemma \ref{lm:derangements}, permutations such that $\pi(s) \neq s$ for all $s$ (derangements) asymptotically, as $m \rightarrow \infty$, form a constant fraction of all permutations, with constant $\frac{1}{e}$. If we pick $v$ permutations $\pi_1, \dots, \pi_v$ and require $\pi_k(s) \neq s$ for $k=1, \dots, v$ and all $s$, the probability that all are derangements also, as $m \rightarrow \infty$, approaches a constant. Choosing $v$ permutations with these restrictions corresponds to graph model $\permrgraph{m}{v}$, while choosing unrestricted permutations corresponds to $\permgraph{m}{v}$. From Theorem \ref{th:friedman}, if we choose $v$ large enough, a random graph $L$ in $\permgraph{m}{v}$ has $\lambda_1 > \frac{1}{2}$ with high probability. Furthermore, it is well known that random regular graphs such as $L$ are connected (\cite[Theorem 9.20]{luczak}). Since the probability that $L$ conforms to restrictions of $\permrgraph{m}{v}$ is bounded from below by a constant, independent of $n$, graphs in $\permrgraph{m}{v}$ will also have $\lambda_1 > \frac{1}{2}$ with high probability.
\end{proof}

From Lemma \ref{lm:friedman-reduced} and and Theorem \ref{th:lambda-1/2}, we obtain:

\begin{corollary}\label{cor:permutationr-t}
For $v$ sufficiently large, a random group in the reduced permutation model $\permutationr{n}{v}$ has property (T) with overwhelming probability.
\end{corollary}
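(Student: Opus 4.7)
The plan is to combine Lemma \ref{lm:friedman-reduced} with the decomposition of $L(\Ss)$ into the subgraphs $L_1, L_2, L_3$ and then invoke the spectral criterion (Theorem \ref{th:lambda-1/2}). Concretely, fix $\Gamma = \pres{\Ss}{\R}$ sampled from $\permutationr{n}{v}$. Each relator in $\R$ has the form $s^{\pm 1}\pi_1^i(s^{\pm 1})\pi_2^i(s^{\pm 1})$ for some $s\in\Ss\cup\Ss^{-1}$ and $i\in\{1,\dots,v\}$, and inspection of Definitions \ref{def:graph-l'} and \ref{def:configuration-red} shows that the subgraph $L_j$ is built precisely from the $j$-th ``slot'' of these relators. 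First I would verify the identification $L_j\sim \permrgraph{n}{v}$ for each $j\in\{1,2,3\}$: the edges of $L_1$, for instance, are of the form $(s,\pi_1^i(s)^{-1})$, and since the $\pi_1^i$ are distributed uniformly over permutations subject to the cyclic-reduction constraints of $\permutationr{n}{v}$, composing with the involution $s\mapsto s^{-1}$ yields independent uniform permutations subject to $\pi(s)\neq s^{-1}$, exactly as in Definition \ref{def:configuration-red}.

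Next I would appeal to Lemma \ref{lm:friedman-reduced} three times: for $v$ large enough, each $L_j$ is connected and satisfies $\lambda_1(L_j) > \tfrac{1}{2}$ with probability tending to $1$. Since there are only three such events, a union bound gives that with probability tending to $1$ all three simultaneously satisfy $\lambda_1(L_j) > \tfrac{1}{2}$. At this point I would invoke Żuk's observation (\cite[Lemma 6]{zuk}, referenced in Section \ref{section:zuk-outline}) that
\[
\lambda_1(L(\Ss)) \;\geq\; \min_{j=1,2,3} \lambda_1(L_j),
\]
so that w.o.p.\ $\lambda_1(L(\Ss)) > \tfrac{1}{2}$; connectivity of the $L_j$ also transfers to $L(\Ss)$, which contains each $L_j$ as a subgraph. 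Finally, Theorem \ref{th:lambda-1/2} applies directly and yields that $\Gamma$ has property (T) with overwhelming probability.

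The main potential obstacle is the very first bookkeeping step: one must check that the marginal distribution of each $L_j$ is indeed $\permrgraph{n}{v}$, which requires being careful about which ``slot'' of the relator contributes to which subgraph and about the effect of the inverses appearing in Definition \ref{def:graph-l'}. The cyclic-reduction constraints are symmetric across the three slots, so this amounts to a straightforward change of variables, but it is the one place where the definitions of $\permutationr{n}{v}$, $L_j$, and $\permrgraph{n}{v}$ must be aligned with care. Once this identification is made, the remainder is a short assembly of results already established.
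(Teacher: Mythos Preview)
Your proposal is correct and follows essentially the same route as the paper. The paper's own argument is just the one-line derivation ``From Lemma \ref{lm:friedman-reduced} and Theorem \ref{th:lambda-1/2}, we obtain \ldots'', relying on the identification of each $L_j$ with a graph in $\permrgraph{n}{v}$ (stated immediately before Lemma \ref{lm:friedman-reduced}) and on \cite[Lemma 6]{zuk} (recalled in Section \ref{section:zuk-outline}); you have simply unpacked these implicit steps, and your caution about verifying the marginal distribution of each $L_j$ is well placed but does not diverge from the paper's reasoning.
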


We therefore obtain the main result of this section, Theorem \ref{th:theoremA}.

\begin{proof}[Proof of Theorem \ref{th:theoremA}]
Since $d > \frac{1}{3}$, we have $(2m-1)^{1 + \varepsilon}$ relators and thus from Corollary \ref{th:reduced-matching} a random triangular presentation contains, with overwhelming probability, a subset of relators of the form $s^{\pm 1}\pi_1(s^{\pm 1})\pi_2(s^{\pm 1})$ for $s \in \Ss$. Actually, for any fixed $v$ we can find $v$ disjoint subsets of such relators --- it is enough to divide the set of relators $\R$ arbitrarily into $v$ equal disjoint parts and find a desired subset in each of them. Obviously if we can find such a subset with overwhelming probability in one part, we will also be able to do this in all $v$ parts, as $v$ is fixed and each part has, up to a constant, the same number of relators as $\R$. 

Thus, with overwhelming probability we can assign to a random group $\Gamma$ in $\triangular{m}{d}$ a group $\Gamma'$ in $\permutationr{m}{v}$ such that $\Gamma$ is a quotient of $\Gamma'$. The choice of $v$ subsets of relators is not unique, but we can perform it in such a way that $\Gamma'$ is uniformly random, i.e. is a random group in $\permutationr{m}{v}$. For example, we can first enumerate such subsets in arbitrary order and always choose the first one that appears. If we take $v$ large enough, from Corollary \ref{cor:permutationr-t} $\Gamma'$ has property (T) with overwhelming probability, and since $\Gamma$ is a quotient of $\Gamma'$, $\Gamma$ has propery (T) with overwhelming probability.
\qedhere
\end{proof}

\subsection{From the triangular model to the Gromov model}\label{triang2gromov}
 
In this section we show how to pass from the triangular model to the Gromov density model and thus complete the proof of Theorem \ref{th:theoremB}. The idea is, as before, to prove that a random group in the Gromov model is a quotient of a random group in the triangular model. We first describe the proof informally and then present a more detailed argument, dealing with certain technical issues.

The first attempt, based on \cite[Section I.3.g]{ollivier2005},  would be as follows. Let us fix $n$, $l$, $d$ and let $W$ denote the set of all reduced words over letters $a_{1}^{\pm 1}, \ldots, a_{n}^{\pm 1}$ of length $l/3$ (assuming that $l$ is divisible by $3$). If we now consider a group $\Gamma$ in the $\triangular{m}{d}$ model with $2m = |W|$, we see that there is a natural mapping from $\triangular{m}{d}$ to $\gromov{n}{l}{d}$ (with $a_{1}, \ldots, a_{n}$ as generators in the latter model), with the same $d$. Namely, let $W^{+}, W^{-}$ be such that $W = W^{+} \cup W^{-}$, $|W^{+}| = |W^{-}| = \frac{1}{2}|W|$, $w_{i} \neq w_{j}^{-1}$ for $w_{i}, w_{j} \in W^{+}$ and words from $W^{-}$ are inverses of words from $W^{+}$. We then enumerate the elements of $W^{+}$ arbitrarily and define the homomorphism as $\phi(s_{i}) = w_{i}$, where $s_{1}, \ldots, s_{m}$ are generators in the triangular model and $w_{1}, \ldots, w_{m}$ are distinct elements of $W^{+}$. This is a homomorphism from $\Gamma$ into a group $\Gamma'$ in the $\gromov{n}{l}{d}$ model, as each relator of the form $s_{x}s_{y}s_{z}$ is mapped to a word of length $l$, a valid relator in the Gromov model (with the same density $d$, as $(2m - 1)^{3d} = (|W| - 1)^{3d} \approx (2n -1)^{ld}$). In fact, this is an epimorphism onto a subgroup of finite index in $\Gamma'$ and since every random group in the Gromov model arises from a triangular group by means of this homomorphism (so that the probability distribution induced on $\triangular{m}{d}$ is uniform), it follows that with high probability groups in $\gromov{n}{l}{d}$ have property (T).

The problem with this construction is that the words $\phi(s_{x}s_{y}s_{z})$ obtained in the image of the homomorphism may not be reduced. For example, if $\phi(s_{1}) = a_{1}a_{2}a_{3}$ and $\phi(s_{2}) = a_{3}^{-1}a_{1}a_{1}$, then $s_{1}s_{2}s_{2}$, a valid relation in the triangular model, is mapped to $a_{1}a_{2}a_{3}a_{3}^{-1}a_{1}a_{1}a_{3}^{-1}a_{1}a_{1}$, which is not reduced and hence not a valid relator in the Gromov model. As such reductions will occur with overwhelming probability, we have to introduce a modification of the triangular model and the homomorphism to deal with this issue.

\begin{definition}\label{def:triangular-positive}
Fix $d \in (0,1)$. A group $\Gamma$ in the {\bf positive triangular model $\triangularplus{m}{d}$} is given by $\Gamma = \pres{\Ss}{\R}$, where $\vert\Ss\vert = m$ and $\R$ is a set of $(2m-1)^{3d}$ relators, chosen independently and uniformly from the set of relators of length $3$, with the restriction that only words not using elements from $\Ss^{-1}$ are chosen.
\end{definition}

The graph $L(\Ss)$ associated with the group is defined in the same way as for the standard triangular model. Note that this time $L(\Ss)$ is always a bipartite graph, with vertex sets $\Ss$ and $\Ss^{-1}$, as there are no edges of the form $ss'$ or $s^{-1}s'^{-1}$.

We would like to prove the analogue of Theorem \ref{th:theoremA} for the positive triangular model.

The main difference with the standard triangular model is that we cannot use Theorem \ref{th:friedman} to deduce large $\lambda_1$ eigenvalue of the graph induced by the permutation subset of relators. Theorem \ref{th:friedman} works for graphs chosen at random among all $v$-regular graphs, and in our case each graph $L_{i}$, as defined in Section \ref{section:zuk-outline}, is always bipartite and $v$-regular. Fortunately, the following analogue of Theorem \ref{th:friedman} easily follows from a more general result about random graphs (which, incidentally, can also be used to prove Theorem \ref{th:friedman}, with a different bound):

\begin{theorem}[{\cite[Theorem 1.2]{friedman-relativeexpanders}}]\label{th:friedman-bi}
Consider the vertex set $V$ consisting of two disjoint copies of $\{1, \ldots, n\}$. Let $G$ be a random bipartite $v$-regular graph obtained by choosing uniformly at random $v$ permutations $\pi_{1}, \ldots, \pi_{v}$ of $\{1, \ldots, n\}$ and inserting edges $\left(i, \pi_{j}(i)\right)$, $j = 1, \ldots, v$, $i = 1, \ldots, n$, into $V$. Then for any $\varepsilon > 0$:
\[
\lim\limits_{n \to \infty}\mathbb{P}\left( \lambda_{1}(G) \geq 1 - \left( \frac{\sqrt{2v}(v - 1)^{\frac{1}{4}}}{v} + \frac{\varepsilon}{v} \right)  \right) = 1
\]
\end{theorem}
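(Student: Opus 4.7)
The plan is to translate the Laplacian bound into one on the adjacency spectrum and then attack the latter by the method of moments. Let $A$ denote the adjacency matrix of $G$. Since $G$ is $v$-regular, $\Delta = I - \tfrac{1}{v}A$, and because $G$ is bipartite the spectrum of $A$ is symmetric about $0$ with extreme eigenvalues $\pm v$. Writing $\mu^{\ast}$ for the largest absolute value of any eigenvalue of $A$ other than $\pm v$, the stated inequality is equivalent to
\[
\mathbb{P}\!\left(\mu^{\ast} \leq \sqrt{2v}\,(v-1)^{1/4} + \varepsilon\right) \longrightarrow 1 \quad \text{as } n \to \infty.
\]

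I would attack $\mu^{\ast}$ through even traces. For an even integer $2k = 2k(n)$ growing slowly with $n$, we have $(\mu^{\ast})^{2k} \leq \mathrm{Tr}(A^{2k}) - 2v^{2k}$, and the trace on the right counts closed walks of length $2k$ in $G$. Because the edge set is $\{(i, \pi_j(i)) : 1 \leq i \leq n,\, 1 \leq j \leq v\}$, such a walk is encoded by a starting vertex $i_{0}$ and a word $w$ of length $2k$ in the alphabet $\{\pi_{1}^{\pm 1}, \ldots, \pi_{v}^{\pm 1}\}$ with $w(i_{0}) = i_{0}$. Grouping walks by the isomorphism type of the multigraph they trace out, $\mathbb{E}[\mathrm{Tr}(A^{2k})]$ becomes a sum over combinatorial shapes, each weighted by the number of ways to label its vertices in $\{1,\ldots,n\}$ and to satisfy the coincidence constraints on the random permutations.

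The walks then split into two classes. The tree-like class consists of walks whose trace multigraph is a tree with every edge used exactly twice; these contribute, up to polynomial factors, the number of closed non-backtracking walks of length $2k$ on the universal cover (the $v$-regular tree), and the bipartite combinatorics yields the announced exponent $\bigl(\sqrt{2v}\,(v-1)^{1/4}\bigr)^{2k}$. The tangled class, in which the trace graph carries an extra cycle, is handled by a standard tangle-free truncation: conditioning on $G$ being tangle-free in balls of radius comparable to $k$ around each vertex has failure probability $o(1)$ provided $k = o(\log n/\log v)$, and under the conditioning tangled walks contribute a strictly smaller exponential rate.

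Putting these estimates together gives $\mathbb{E}[(\mu^{\ast})^{2k}] \leq \mathrm{poly}(k)\,\bigl(\sqrt{2v}\,(v-1)^{1/4}\bigr)^{2k}$, after which Markov's inequality applied at $t = \sqrt{2v}\,(v-1)^{1/4} + \varepsilon$ with $k = \lfloor c \log n\rfloor$ finishes the proof. The hard part is the walk enumeration itself: each additional independent cycle in a trace shape gains an $n^{-1}$ suppression but simultaneously enlarges the permutation-labeling entropy, and balancing these two effects uniformly over all shapes is what makes the underlying counting argument in \cite{friedman-relativeexpanders} technically intricate, and is the reason we quote the result as a black box rather than reproving it.
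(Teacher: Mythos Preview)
Both you and the paper ultimately invoke \cite{friedman-relativeexpanders} as a black box, but the paper's route is much shorter and different in spirit. The paper does not sketch any trace method; it simply observes that a random bipartite $v$-regular graph built from $v$ independent permutations is exactly a random $n$-sheeted cover of the base graph consisting of two vertices joined by $v$ parallel edges, and \cite[Theorem~1.2]{friedman-relativeexpanders} is a general statement about the new eigenvalues of random covers of an arbitrary base graph. Specializing that theorem to this dipole yields the stated bound in one line, including the specific constant $\sqrt{2v}\,(v-1)^{1/4}$.

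Your outline instead opens the black box and sketches Friedman's internal machinery. That is a reasonable summary of the method, but one detail is off: tree-like closed walks on the $v$-regular universal cover contribute the Ramanujan exponent $2\sqrt{v-1}$, not $\sqrt{2v}\,(v-1)^{1/4}$. The constant in the theorem is the geometric mean of the trivial base eigenvalue $v$ and the tree spectral radius $2\sqrt{v-1}$; it arises from Friedman's specific (and deliberately non-sharp) estimate for random lifts, not from the tree-walk count alone. Since you end by quoting the result anyway this does not break your argument, but the paper's approach---specialize the stated covering theorem rather than re-sketch its proof---is both shorter and avoids this mis-attribution.
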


This in particular implies that $\lambda_1 \to 1$ with $v \to \infty$.

One obtains this theorem by applying \cite[Theorem 1.2]{friedman-relativeexpanders} to a base graph consisting of two vertices connected by $v$ edges. Every bipartite $v$-regular graph is then a cover of this base graph.

\begin{theorem}\label{th:triangular-t-pos}
A random group in the $\triangularplus{m}{d}$ model for $d > \frac{1}{3}$ has property (T) with overwhelming probability
\end{theorem}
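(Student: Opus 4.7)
The plan is to adapt the proof of Theorem~\ref{th:theoremA} almost verbatim, with three small modifications that reflect the positivity constraint. First, I would introduce a ``positive permutation model'', the analogue of $\permutationr{n}{v}$ in which the $v$ pairs of permutations act on $\Ss$ itself (not on $\Ss \cup \Ss^{-1}$) and produce relators of the form $s\,\pi_1^{(k)}(s)\,\pi_2^{(k)}(s)$ with $s,\pi_1^{(k)}(s),\pi_2^{(k)}(s) \in \Ss$. Such a presentation contains $mv$ positive relators of length $3$, which are automatically cyclically reduced. The goal is to show that a random group in $\triangularplus{m}{d}$ is, with overwhelming probability, a quotient of a uniformly random group in this positive permutation model.

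The second step is to find a subset of relators in a random presentation in $\triangularplus{m}{d}$ corresponding to a pair of permutations of $\Ss$. As in Section~\ref{perm2triang}, this amounts to finding a perfect matching in a $3$-partite $3$-hypergraph, but now on $3m$ vertices (three copies of $\Ss$) with $(2m-1)^{1+\varepsilon}$ hyperedges. The derangement detour used in Corollaries~\ref{lm:derangements} and~\ref{th:reduced-matching} is unnecessary here, since every positive word of length~$3$ is automatically cyclically reduced, so all sampled edges are admissible. Theorem~\ref{th:kahn} applies directly: since $(2m-1)^{1+\varepsilon} = \omega(m\log m)$, a perfect matching exists with high probability, and by partitioning the relator set into $v$ equal subsets and applying the theorem in each, I obtain $v$ disjoint permutation-pairs, hence an epimorphism from a random positive-permutation group onto~$\Gamma$.

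The third step is to verify the analogue of Corollary~\ref{cor:permutationr-t} for the positive permutation model, using the spectral criterion. For any positive presentation the graph $L(\Ss)$ is bipartite with parts $\Ss$ and $\Ss^{-1}$, and each subgraph $L_i$ is also bipartite. When the relators come from the positive permutation model with $v$ pairs of permutations, each $L_i$ is obtained by choosing $v$ uniform permutations of $\Ss$ and inserting edges $(s, \pi_k(s)^{-1})$; identifying $\Ss^{-1}$ with $\Ss$, this is precisely the bipartite $v$-regular graph model of Theorem~\ref{th:friedman-bi}. That theorem gives $\lambda_1(L_i) \to 1$ as $v \to \infty$, so for $v$ large enough I obtain $\lambda_1(L_i) > \tfrac{1}{2}$ with high probability for each $i$, hence $\lambda_1(L(\Ss)) > \tfrac{1}{2}$ by \cite[Lemma 6]{zuk}, and therefore property~(T) via Theorem~\ref{th:lambda-1/2}.

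Combining these steps via Remark~\ref{th:homomorphism} yields the theorem. The only obstacle I foresee is the bookkeeping required to ensure that the distribution induced on the positive permutation model is uniform, so that Theorem~\ref{th:friedman-bi} is applicable as stated; the device from the proof of Theorem~\ref{th:theoremA} (enumerating candidate permutation-subsets in a fixed order and picking the first that appears) transfers without modification, so this is a routine adaptation rather than a genuine difficulty. All genuinely new input is the single substitution of Theorem~\ref{th:friedman-bi} for Theorem~\ref{th:friedman}, dictated by the bipartite structure forced by positivity.
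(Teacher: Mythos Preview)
Your proposal is correct and matches the paper's own proof essentially point for point: introduce a positive permutation model (permutations acting on $\Ss$ alone), find perfect matchings via Theorem~\ref{th:kahn} without the derangement detour since positive words are automatically cyclically reduced, and replace Theorem~\ref{th:friedman} by Theorem~\ref{th:friedman-bi} to handle the bipartite structure of the $L_i$. The paper makes exactly these three modifications and likewise defers the remaining details to the proof of Theorem~\ref{th:theoremA}.
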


\begin{proof}
The proof is esentially the same as the proof of Theorem \ref{th:theoremA}, so we will not repeat all the details. In fact, it is even easier, as we do not have to deal with possible reductions and require that the permutations obtained in the proof are derangements.

We define the graph $G_{3}(n, M)$ in the same way as before, with the modification that it has only vertices labelled by $s_{1}, \ldots, s_{n}$, as their inverses are not used in any relators in $\triangularplus{m}{d}$. Using Theorem \ref{th:kahn}, we deduce that with high probability it has a perfect matching, which corresponds to a set of relators of the form $s\pi_{1}(s)\pi_{2}(s)$. This enables us to show that with high probability every group in the $\triangularplus{m}{d}$ model is a quotient of a random group in the modified $\permutation{n}{v}$ model, where we disallow inverses of generators to appear, as in the definition of $\triangularplus{m}{d}$. By Theorem \ref{th:friedman-bi} the latter groups satisfy the spectral criterion, as in the case of standard permutation model, and the theorem follows.
\end{proof}

We now proceed to define the desired homomorphism. Fix $n$, $l$, $d$ and a set of generators $\mathcal{A} = \{a_{1}, \ldots, a_{n}\}$. We will call elements from $\mathcal{A}$ positive letters and elements from $\mathcal{A}^{-1}$ negative letters. Let $W_{l/3}$ denote the set of all reduced words over $\mathcal{A} \cup \mathcal{A}^{-1}$ of length $l/3$ which begin and end with a positive letter. By $W'_{l}$ we denote the set of words of the form $w_{1}w_{2}w_{3}$, where $w_{i} \in W_{l/3}$ (all words in $W'_{l}$ are reduced and of length $l$). Consider now the set $\Ss = \{ s_{1}, \ldots, s_{m} \}$, where $m = |W_{l/3}|$, and enumerate all elements of $W_{l/3}$ in an arbitrary way. Let $\langle g_{1}, \ldots, g_{m}\rangle$ denote the free group generated by $\{g_{1}, \ldots, g_{m}\}$. We define a homomorphism $\phi : \langle s_{1}, \ldots, s_{m}\rangle \to \langle a_{1}, \ldots, a_{n} \rangle$ by $\phi(s_{i}) = w_{i}$, where $w_{i}$ is the $i$-th element of $W_{l/3}$.

Now note that if $\Gamma = \pres{\Ss}{\R}$ is a group in the $\triangularplus{m}{d}$ model, then each relator $s_{x}s_{y}s_{z}$ is mapped by $\phi$ to a word in $W'_{l}$. Therefore $\phi$ projects to a homomorphism (for simplicity also denoted by $\phi$) from $\Gamma$ to $\Gamma' = \pres{\mathcal{A}}{\phi(\R)}$, where $|\R| = |\phi(\R)| = (2m -1)^{3d}$ and each relator in $\phi(\R)$ is from $W'_{l}$. Actually it is an epimorphism onto a subgroup of finite index in $\Gamma'$.

\begin{lemma}\label{lm:finite-index}
$\phi(\Gamma)$ is a subgroup of finite index in $\Gamma'$.
\end{lemma}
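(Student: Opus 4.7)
The plan is to reduce the lemma to a combinatorial statement in the free group $F_n = \langle a_1, \ldots, a_n\rangle$, namely that $\phi(F_m)$ has finite index in $F_n$, where $F_m = \langle s_1, \ldots, s_m\rangle$ and $\phi$ is the free-group extension of the map $s_i \mapsto w_i$. The projections $F_m \twoheadrightarrow \Gamma$ and $F_n \twoheadrightarrow \Gamma'$ intertwine with $\phi$, and the image $\phi(\Gamma)$ in $\Gamma'$ is precisely the image of $\phi(F_m)$ under $F_n \twoheadrightarrow \Gamma'$, whence $[\Gamma' : \phi(\Gamma)] \le [F_n : \phi(F_m)]$, so it suffices to bound the latter.

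Set $L = l/3$. I first extract a useful supply of short elements of $\phi(F_m) = \langle W_L \rangle$. The words $a_i^L$ lie in $W_L$ directly, so $a_i^L \in \phi(F_m)$ for each $i$. Moreover, for any $i, j, k$, the words $a_i a_j^{L-1}$ and $a_k a_j^{L-1}$ are reduced of length $L$ and begin and end with positive letters, so they lie in $W_L$; their quotient
\[
(a_i a_j^{L-1})(a_k a_j^{L-1})^{-1} = a_i a_k^{-1}
\]
therefore belongs to $\phi(F_m)$ for all $i, k$.

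The strategy is then to show by induction on reduced word length that every $u \in F_n$ is equivalent modulo $\phi(F_m)$ to a reduced word of length at most a constant $C = C(n, L)$, thus producing finitely many coset representatives. For $u$ long enough, one extracts the initial length-$L$ block $b$ of $u$: if $b$ starts and ends with positive letters then $b \in W_L \subseteq \phi(F_m)$ and can be peeled off, shortening $u$ by $L$; the analogous move handles $b \in W_L^{-1}$, when both boundary letters of the block are negative. In the polarity-mismatched cases (block begins positive and ends negative, or vice versa) one first left-multiplies $u$ by a short element of $\phi(F_m)$, such as a suitable $a_i a_k^{-1}$ or its inverse, in order to flip the offending first or last letter of the block; the peel-off step then applies and the net operation still decreases $|u|$ (the small cases $L=1, 2$ being handled more directly). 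Iterating, $|u|$ shrinks to at most $C$ after finitely many steps.

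The main technical obstacle is precisely the polarity constraint built into the definition of $W_L$: one cannot blindly peel off length-$L$ prefixes, so the corrections by the short elements $a_i a_k^{-1}$ must be carefully interleaved with the peel-offs, and the length bookkeeping must be tracked to ensure strict decrease at each iteration. Once this combinatorial step is handled, the set of reduced words of length at most $C$ is finite and contains a complete system of coset representatives, so $[F_n : \phi(F_m)] < \infty$ and the lemma follows.
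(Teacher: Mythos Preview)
Your reduction to showing $[F_n : \phi(F_m)] < \infty$ in the free group is correct and is implicitly what the paper does as well. The inductive peel-off strategy is a reasonable plan, but the step you yourself flag as the main obstacle is not resolved by the mechanism you propose, and this is a genuine gap.

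The problem is that left-multiplication by $a_i a_k^{-1}$ (or its inverse $a_k a_i^{-1}$) can only replace the first letter of $u$ by another letter \emph{of the same sign}: if $u$ begins $a_k\cdots$ then $(a_ia_k^{-1})u$ begins $a_i\cdots$, still positive. So when the length-$L$ prefix starts positive and ends negative, no correction of this type makes the prefix land in $W_L\cup W_L^{-1}$. To genuinely flip the sign of the first letter you would need $a_ia_k\in H$ (equivalently $a_k^{-1}a_i^{-1}\in H$), and you have not shown this; in fact the paper's own coset analysis places such length-$2$ positive words among the coset representatives rather than in $H$. A concrete sticking point: for $L=4$ and $u=a_1a_2a_3^{-1}a_4^{-1}a_1a_2a_3^{-1}a_4^{-1}\cdots$, the prefix $a_1a_2a_3^{-1}a_4^{-1}$ has mismatched ends, the first two letters $a_1a_2$ are both positive so the $a_ia_k^{-1}$ trick does not shorten, and applying $a_1^{-4}$ yields $a_1^{-3}a_2a_3^{-1}a_4^{-1}\cdots$ whose length-$4$ prefix again has mismatched ends. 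Your descent can be rescued, but only by pre-multiplying by carefully chosen \emph{general} elements of $W_L$ that match an initial segment of $u$ and pad the remainder; once you do this and track the two-step length bookkeeping, you are essentially reconstructing the paper's argument.

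The paper's proof is organised differently and avoids the induction altogether. It first proves a single constructive lemma: every reduced word of the form $a\,v\,b^{-1}$ (and by inversion $a^{-1}\,v\,b$) with $a,b$ positive letters and $|v|$ even lies in $H$. This is done by splitting $v=v'v''$ into equal halves, padding each of $av'$ and $bv''^{-1}$ with auxiliary letters $u,t$ to reduced words of length exactly $L$ in $W_L$, and multiplying $(av'ut)(bv''^{-1}ut)^{-1}$; longer $v$ are handled by chaining such pieces. With this large supply of elements of $H$ in hand, a short case analysis on the parity of $|w|$ and the signs of its first and last letters shows that every $w$ equals $u$, $tu$, or $stu$ with $u\in H$ and $s,t$ single letters, giving finitely many left cosets.
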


\begin{proof}
Throughout the proof $a, b, c$ will always denote positive letters. 

$\phi(\Gamma)$ is generated by all reduced words of length $l/3$ which are of the form $avb$. We will show that every word over $\mathcal{A} \cup \mathcal{A}^{-1}$ is equal in the free group to a word of the form $u$, $tu$ or $stu$, where $u \in \phi(\Gamma)$ and $s, t \in \mathcal{A} \cup \mathcal{A}^{-1}$, which is enough to prove that $\phi(\Gamma)$ has finite index in $\Gamma'$.

Observe that all words of the form $avb^{-1}$ (and similarly $a^{-1}vb$), where $v$ has even length, belong to $\phi(\Gamma)$. Namely, set $k = l/3 - 2$ and let $v$ have even length. If $v$ has length at most $2k$, then we write $v = v'v''$, where $v'$, $v''$ have equal length, and concatenate words $av'ut$ and $t^{-1}u^{-1}v''b^{-1} = (bv''^{-1}ut)^{-1}$, where $u$ is a word and $t$ is a positive letter, chosen so that both $av'ut$ and $bv''^{-1}ut$ are reduced. If $v$ has length greater than $2k$, then we write $v$ as $v = v_{1}v_{2}\ldots v_{j} v'v''$, for some $j$, where each $v_{i}$ has length $2k$ and $v'$, $v''$ have length at most $k$. We then concatenate words $av_{1}t_{1}^{-1}$, $t_{1}v_{2}t_{2}^{-1}$, $\ldots$, $t_{j-1}v_{j}t_{j}^{-1}, t_{j}v'ut_{j+1}$ and $t_{j+1}^{-1}u^{-1}v''b^{-1}$, where again $u$ and $t_{i}$ are chosen so that all these words are reduced.

Suppose now that we have a reduced word $w$ over $\mathcal{A} \cup \mathcal{A}^{-1}$. Without loss of generality we can assume that $w$ begins with $a$ or begins with $a^{-1}$ and ends with a positive letter. Assume that $w$ begins with $a$ (the other case is dealt with analogously). Consider first the case when $w$ has odd length. If $w$ ends with a positive letter, say $b$, then for any $c \neq a$ the word $c^{-1}w$ is of the form $c^{-1}w'b$, where $w'$ has even length, so it belongs to $\phi(\Gamma)$. Similarly, if $w$ ends with a negative letter, say $b^{-1}$, than for $c$ the word $cw$ belongs to $\phi(\Gamma)$.

If $w$ has even length, then we have two cases. If it ends with some $b^{-1}$, then it is of the form $aw'b^{-1}$, where $w'$ has even length, so it belongs to $\phi(\Gamma)$. If it ends with some $b$, then for $c \neq a$ we have that $c^{-1}a^{-1}w$ is of the form $c^{-1}w'b$, where $w'$ has even length, so $c^{-1}a^{-1}w$ belongs to $\phi(\Gamma)$.
\end{proof}

We are now in the position to finish the proof of our main theorem.

\begin{proof}[Proof of Theorem \ref{th:theoremB}]
Fix $n$, $l$ and $1/3 < d < 1/2$. Let $m = |W_{l/3}|$ (with all the notation as above). We will first show that if $\Gamma' = \pres{\mathcal{A}}{\R}$ is a random group in the $\gromov{n}{l}{d}$ model, but with the restriction that we choose $(2m - 1)^{3d}$ relators only from $W'_{l}$, then with overwhelming probability $\Gamma'$ satisfies property (T).

Suppose $\Gamma'$ is such a group. By means of the construction described above, we can find a group $\Gamma$ in the $\triangularplus{m}{d}$ model, with the set of generators $\Ss$ and $m = |W_{l/3}|$, such that $\phi(\Gamma)$ is a subgroup of finite index in $\Gamma'$ (Lemma \ref{lm:finite-index}). The probability distribution on $\triangularplus{m}{d}$ obtained in this way is uniform and from Theorem \ref{th:triangular-t-pos} we know that with overwhelming probability $\Gamma$ has property (T). Since $\phi$ is an epimorphism from $\Gamma$ onto a subgroup of finite index in $\Gamma'$, from Remark \ref{th:homomorphism} and Remark \ref{rem:finite-t} we have that $\Gamma'$ satisfies property (T) with overwhelming probability.

Now let $\Gamma' = \pres{\Ss}{\R}$ be a random group in the $\gromov{n}{l}{d}$ model. We will show that with high probability $\R$ contains a subset $\R'$ of at least $(2m - 1)^{3d}$ relators from $W'_{l}$. 

For $C > 0$, where $C \approx \frac{1}{2^6}$, we have $|W'_{l}| = \frac{1}{C} (2n - 1)^{l}$. This means that the average number of relators from $W'_{l}$ in $\R$ is $\frac{1}{C} (2n - 1)^{ld}$. With overwhelming probability $\R$ contains at least, say, $\frac{1}{2}\frac{1}{C}(2n - 1)^{ld}$ such relators. Since we want to have $(2m - 1)^{3d} = (|W_{l/3}| - 1)^{3d} \approx |W'_{l}|^{d} = \frac{1}{C} (2n - 1)^{ld}$ relators, we should make an inessential change in the definition of $\triangularplus{m}{d}$, taking $\frac{1}{2}(2m - 1)^{3d}$ relators instead of $(2m - 1)^{3d}$. Alternatively we could take the density in the triangular model to be some $d'$ arbitrarily smaller than $d$ and construct the homomorphism from $\triangularplus{m}{d'}$ to $\gromov{n}{l}{d}$. Another option is choosing relators of length in $[l-C,l+C]$ for some constant $C>0$.

Since with overwhelming probability we can find a subset $\R'$ of $\R$ consisting of at least $\frac{1}{2}(2m - 1)^{3d}$ words from $W'_{l}$ (and, in the same way as in the proof of Theorem \ref{th:theoremA}, it can be chosen to be uniformly random), our group $\Gamma'$ will then be a quotient of a group $\pres{\Ss}{\R'}$, with $\R'$ containing only words from $W'_{l}$, and we have shown that such groups typically have property (T), which finishes the proof.
\end{proof}

Actually we have covered only the case of $l$ divisible by $3$. This can be dealt with by relaxing the requirement that we choose relators of length exactly $l$ in the definition of the density model and instead allow all relators of length $\leq l$, which is arguably more natural (see discussion in \cite[I.2.c]{ollivier2005}). With high probability we can then find a sufficiently large subset of relators of length divisible by $3$ and repeat the proof above. Alternatively one could try to modify the triangular model and the homomorphism.

The final remark is that most of technical issues in proofs in this and the preceding section stem from the requirement that we choose only reduced words in the random presentations. If we allow all words, not necessarily reduced, to be chosen in the Gromov model, the straightforward construction of the homomorphism described in the beginning of this section works well. Similarly, if we allow nonreduced words in the triangular model, we do not have to deal with reductions and require that permutations are derangements when passing from the permutation model to the triangular model. For the sake of continuity with previous work in random groups we have adopted the traditional definitions of both models, but perhaps allowing all words to appear in the presentations would be more natural in many contexts.

\section{The triangular model - alternative approach}\label{ch4}
Here, we provide another proof that for $d > \frac{1}{3}$ random groups in the triangular model $\triangular{m}{d}$ have property (T) with overwhelming probability. The proof is based solely on spectral properties of random graphs, in particular almost regular graphs. In Section \ref{spectral}, we state basic definitions and lemmas about spectra of graphs. Then, in Section \ref{gnm-gnp}, we use results about the second smallest eigenvalue the of Laplacian $\lambda_1$ of a $G(n, p)$ random graph to deduce that a random graph in $G(n, M)$ has $\lambda_1 > \frac{1}{2}$ with high probability. In Section \ref{spectral-final}, we finish the proof by showing how to apply results about $G(n, M)$ model to the triangular model.
\subsection{Spectral properties of graphs}\label{spectral}

Here we state basic definitions and facts about spectra of random graphs that we will use later. A survey of basic facts from spectral graph theory can be found in \cite{chung}. Throughout this section we will work with $G(n, p)$ and $G(n, M)$ models described in Section \ref{ch3}.

For a graph $G = (V,E)$ with vertex set $V=\{v_1, \dots, v_n\}$, let $d(v_i)$ denote the degree of vertex $v_i$. The {\bf degree matrix} $D(G)$, or simply $D$, is the diagonal matrix with degrees of vertices on its diagonal, i.e. $D_{ii} = d(v_i)$ for $i=1, \dots, n$. We will assume that $G$ is connected and has no vertices of degree $0$. The {\bf adjacency matrix} of $G$, denoted by $A(G)$ or $A$, is the matrix such that its entry $A_{ij}$ is equal to the number of edges between $v_i$ and $v_j$. Since our graphs are unoriented, $A$ is symmetric, $A_{ij} = A_{ji}$.

\begin{definition}
The {\bf normalized Laplacian} $\Ll(G)$ is the matrix defined by
\[\Ll = I - D^{-\frac{1}{2}}AD^{-\frac{1}{2}}
\]
\end{definition}

Both $A$ and $\Ll$ are symmetric. For a symmetric matrix $M$, we can list its eigenvalues in increasing order: $\lambda_0(M) \leq \lambda_1(M) \leq \dots \leq \lambda_{n-2} \leq \lambda_{n - 1}(M)$. It can be easily seen that the spectrum  of $A(G)$ lies in the interval $[-d_{max}, d_{max}]$, where $d_{max} = \max_{v \in G}d(v)$ and the spectrum of $\Ll(G)$ lies in the interval $[0, 2]$. The first nonzero eigenvalue of $\Ll(G)$, $\lambda_1$, is often the called the {\it spectral gap} of $G$.

We can compare the notion of normalized Laplacian to the Laplacian $\Delta$ defined in Section \ref{ch2}, which can be expressed as $\Delta = I - D^{-1}A$. In the case of $d$-regular graphs, where all vertices have degree $d$, we have $D=dI$ and the matrices $\Ll, \Delta$ and $I-\frac{1}{d}A$ coincide. If we have a sequence of graphs which are ``almost regular'', in the sense that their minimal and maximal degrees are close to each other, we expect that spectral properties of $\Ll$ and $\Delta$ should be asymptotically the same, which will be made precise below. 

\begin{definition}
Let $\{G_{n}\}_{n=1}^{\infty}$ be a sequence of graphs and let $d_{n}$ be any sequence. We say that graphs $\{G_{n}\}_{n=1}^{\infty}$ are {\bf almost $d_{n}$-regular} if for every $G_{n}$ its minimum and maximum degree are $(1 + o(1))d_{n}$.
\end{definition}

We will often apply the term ``almost $d_{n}$-regular'' when talking about a single graph, where it is implicit that the graph depends on some parameter $n \to \infty$. In particular we will say that a random graph in $G(n, p)$ (or $G(n, M)$) is {\it almost $d_{n}$-regular with high probability} if its minimum and maximum degree is $(1 + o(1))d_{n}$ with high probability.

We will be interested in studying random graphs in $G(n, p)$, for which the average degree is $(n-1)p$. Below, this will be asymptotically the same as $np$ and for notational convenience we will replace $(n-1)p$ by $np$. For $p$ such that $\binom{n}{2}p = n^{1 + \varepsilon}$, a typical graph in $G(n, p)$ will be almost $np$-regular, thanks to the following easily proved fact \cite[Theorem 8.5.1]{alon}:

\begin{lemma}\label{lm:almost-regular}
Let $p = \omega(\frac{\log n}{n})$ and $G$ be a random graph in $G(n, p)$. Then with high probability for all $v \in G$ we have $d(v) = (1 + o(1))np$, i.e. $G$ is almost $np$-regular.
\end{lemma}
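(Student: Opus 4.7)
The plan is to establish concentration of each degree around its mean via a Chernoff bound and then take a union bound over the $n$ vertices. This is a standard argument once the quantitative parameters are chosen correctly.

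First I would fix a vertex $v \in G$ and observe that, in the $G(n,p)$ model, $d(v)$ is a sum of $n-1$ independent Bernoulli$(p)$ variables, one for each possible edge incident to $v$. Hence $\mathbb{E}[d(v)] = (n-1)p$, which is $(1+o(1))np$ as $n \to \infty$, so it suffices to prove concentration around $np$. A multiplicative Chernoff bound gives
\[
\mathbb{P}\bigl(|d(v) - (n-1)p| > \delta (n-1)p\bigr) \leq 2 \exp\bigl(-c \delta^{2} np\bigr)
\]
for any $\delta \in (0,1)$ and some absolute constant $c > 0$.

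The key point is to choose $\delta = \delta(n) \to 0$ while keeping $\delta^{2} np \gg \log n$; this is possible exactly because we assume $np = \omega(\log n)$. For instance, if $np / \log n \to \infty$, I can set $\delta(n) = \bigl(\log n / np\bigr)^{1/3}$, which tends to $0$ and satisfies $\delta^{2} np = (np)^{1/3} (\log n)^{2/3} = \omega(\log n)$. Then for this choice the bound above becomes $o(1/n)$ for each individual vertex.

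A union bound over all $n$ vertices yields
\[
\mathbb{P}\bigl(\exists v \in G : |d(v) - np| > \delta np\bigr) \leq 2n \exp\bigl(-c \delta^{2} np\bigr) = o(1).
\]
Therefore with high probability every vertex satisfies $d(v) = (1 \pm \delta(n)) np = (1 + o(1))np$, which is the definition of $G$ being almost $np$-regular. The main (and only) technical step is the quantitative selection of $\delta(n)$, which is routine given the hypothesis $p = \omega(\log n / n)$; no obstacle of real substance arises.
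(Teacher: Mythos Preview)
Your argument is correct: Chernoff on each degree followed by a union bound, with $\delta(n)\to 0$ chosen so that $\delta^2 np = \omega(\log n)$, is exactly the standard proof. The paper does not give its own proof of this lemma but simply cites it as an ``easily proved fact'' from \cite[Theorem 8.5.1]{alon}, and your write-up is precisely that easy proof.
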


It can be easily shown that the largest eigenvalue of $A$ for an almost $d_{n}$-regular graph is $(1 + o(1))d_{n}$. The next standard lemma states that having the second largest eigenvalue of $A$ asymptotically smaller than the largest is equivalent to having spectral gap uniformly close to $1$:

\begin{lemma}\label{lm:adjacency-gap}
Let $d_n \rightarrow \infty$ and let $G_n$ be almost $d_{n}$-regular. Then $\frac{1}{d_n}\lambda_{n-2}(A(G_n)) = (1 + o(1))(1 - \lambda_1(\Ll(G_n)))$. In particular if $\lambda_{n-2}(A(G_n)) = o(d_{n})$ then $\lambda_{1}(\Ll(G_{n})) = 1 - o(1)$.
\end{lemma}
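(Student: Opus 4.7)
The plan is to relate the normalized Laplacian $\Ll = I - D^{-1/2}AD^{-1/2}$ to the much simpler operator $\Delta' = I - \frac{1}{d_n}A$ corresponding to the exactly regular case, by showing that the perturbation between them has vanishing operator norm; Weyl's inequality will then transfer the spectral information from $A$ to $\Ll$.

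First I would use almost $d_n$-regularity to factorise $D^{-1/2} = \frac{1}{\sqrt{d_n}}(I + E)$, where $E$ is the diagonal matrix with entries $\epsilon_i = (d(v_i)/d_n)^{-1/2} - 1$. The hypothesis $d(v_i) = (1+o(1))d_n$ uniformly in $i$ immediately gives $\norm{E} = \max_i |\epsilon_i| = o(1)$. Expanding then yields
\[
D^{-1/2}AD^{-1/2} = \frac{1}{d_n}\bigl(A + EA + AE + EAE\bigr).
\]
Because $A$ has nonnegative entries and maximum row sum $d_{max} = (1+o(1))d_n$, one has $\norm{A} \leq d_{max}$, so each of the three correction terms has operator norm at most $O\!\left(\norm{E}\,\norm{A}/d_n\right) = o(1)$.

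Next I would invoke Weyl's inequality for symmetric perturbations: for every index $k$,
\[
\left| \lambda_k\bigl(D^{-1/2}AD^{-1/2}\bigr) - \tfrac{1}{d_n}\lambda_k(A) \right| = o(1).
\]
Since $\Ll = I - D^{-1/2}AD^{-1/2}$, the eigenvalue ordering reverses, and $\lambda_1(\Ll) = 1 - \lambda_{n-2}(D^{-1/2}AD^{-1/2})$ (the top eigenvalue $1$ of $D^{-1/2}AD^{-1/2}$, with eigenvector $D^{1/2}\mathbf{1}$, corresponds to $\lambda_0(\Ll) = 0$). Setting $k=n-2$ delivers the key identity
\[
1 - \lambda_1(\Ll(G_n)) = \tfrac{1}{d_n}\lambda_{n-2}(A(G_n)) + o(1),
\]
which rearranges to the multiplicative form in the statement (the two sides differ by an additive $o(1)$, hence by a factor $1 + o(1)$ whenever either is bounded below, while if both tend to zero the statement is trivial). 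The ``in particular'' clause is then immediate: if $\lambda_{n-2}(A) = o(d_n)$, the right-hand side is $o(1)$, forcing $\lambda_1(\Ll(G_n)) = 1 - o(1)$.

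The only delicate point is ensuring the perturbation is $o(1)$ in operator norm rather than merely entrywise; this is precisely why the hypothesis is phrased as uniform almost-regularity (both minimum and maximum degree equal $(1+o(1))d_n$), which gives the uniform bound $\norm{E} = o(1)$. No other substantive obstacle should arise, and the calculation is essentially a one-line perturbation argument once this setup is in place.
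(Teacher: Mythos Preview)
Your argument is correct and takes a different route from the paper's. The paper works instead through the Courant--Fischer variational characterization: it passes to the similar matrix $\Ll' = I - D^{-1}A$ (same spectrum as $\Ll$), writes $\tfrac{1}{d_n}A = \tfrac{1}{d_n}D \cdot D^{-1}A$, and uses that every diagonal entry of $\tfrac{1}{d_n}D$ equals $1+o(1)$ to compare the Rayleigh quotients of $\tfrac{1}{d_n}A$ and $D^{-1}A$ directly. Your approach via Weyl's inequality is arguably cleaner: it stays entirely within symmetric matrices (the perturbation $EA + AE + EAE$ is manifestly symmetric, and the norm bound $\norm{A}\le d_{\max}$ is immediate), whereas the paper's version implicitly relies on $D^{-1}A$ being self-adjoint for the degree-weighted inner product when it applies the min--max principle, and in fact only spells out one of the two required inequalities.

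One small caveat: your closing remark that the multiplicative form ``is trivial'' when both sides tend to zero is not quite right---two $o(1)$ quantities that differ by an additive $o(1)$ need not have ratio tending to $1$. What you have actually proved is the additive statement $1-\lambda_1(\Ll) = \tfrac{1}{d_n}\lambda_{n-2}(A) + o(1)$, which is exactly what the downstream applications (Lemma~\ref{lemma:haupt} and the ``in particular'' clause) require; the paper's own argument has the same feature, so this is a cosmetic issue with the lemma's phrasing rather than a defect in either proof.
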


\begin{proof}
We shall use the following variational characterization of eigenvalues of a symmetric matrix $M$, called the Courant-Fischer Theorem \cite{chung}:
\[
\lambda_k = \max_{V^{k}}  \min_{\substack{x \perp V^{k} \\ \scalar{x}{x} = 1}} \scalar{Mx}{x}
\]
where the maximum is taken over all $k$-dimensional subspaces $V^{k}$.

It will be more convenient to replace $\Ll$ with $\Ll' = I - D^{-1}A$ - since these two matrices are similar, they share the same spectrum.

Note that since all vertex degrees are $(1+o(1))d_n$, all nonzero entries of $\frac{1}{d_n}D$ are $1 + o(1)$, so we can write:
\begin{align*}
\frac{\lambda_{n - 2}(A)}{d_n} &= \max_{V^{n-2}}  \min_{\substack{x \perp V^{n-2} \\ \scalar{x}{x} = 1}} \scalar{\frac{1}{d_n}Ax}{x} \\
&= \max_{V^{n-2}}  \min_{\substack{x \perp V^{n-2} \\ \scalar{x}{x} = 1}} \scalar{\frac{1}{d_n}DD^{-1}Ax}{x} \\
&\leq (1+ o(1)) \max_{V^{n-2}}  \min_{\substack{x \perp V^{n-2} \\ \scalar{x}{x} = 1}} \scalar{D^{-1}Ax}{x} \\
&= (1+ o(1)) \lambda_{n-2}(I-\Ll') = (1 + o(1)) (1 - \lambda_1(\Ll')) = (1 + o(1)) (1 - \lambda_1(\Ll))
\end{align*}
\qedhere
\end{proof}

The next lemma says that the Laplacian of an almost regular graph does not change much if we add a small ``correction'' to the graph. 

\begin{lemma}\label{lemma:haupt}
Let $G$ be an almost $d_{n}$-regular graph and let $K$ be a graph on the same vertex set whose maximum degree is $o(d_{n})$. Then: 
\begin{enumerate}[a)]
\item $G \cup K$ is almost $d_{n}$-regular  
\item $\lambda_{1}(\Ll(G)) = 1 - o(1)$ if and only if $\lambda_{1}(\Ll(G \cup K)) = 1 - o(1)$.
\end{enumerate}
\end{lemma}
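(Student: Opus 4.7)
Part (a) is immediate from the definitions. For every vertex $v$ one has $d_{G}(v) \leq d_{G \cup K}(v) \leq d_{G}(v) + d_{K}(v)$, and by assumption the left side is $(1 + o(1))d_{n}$ while the correction $d_{K}(v)$ is at most $o(d_{n})$. Hence $d_{G \cup K}(v) = (1 + o(1))d_{n}$ uniformly in $v$, so $G \cup K$ is almost $d_{n}$-regular.

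For part (b), the plan is to pass through the adjacency matrices and invoke Lemma \ref{lm:adjacency-gap}. Let $E := A(G \cup K) - A(G)$. Whether one interprets $G \cup K$ as a multigraph sum or as ordinary union, $E$ is a symmetric matrix with nonnegative integer entries each bounded by the corresponding entry of $A(K)$. Consequently its largest row sum is at most the maximum degree of $K$, so the operator norm satisfies $\Vert E \Vert \leq \Delta(K) = o(d_{n})$.

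Next, by Weyl's inequality applied to the symmetric matrices $A(G)$ and $A(G \cup K) = A(G) + E$,
\[
|\lambda_{n-2}(A(G \cup K)) - \lambda_{n-2}(A(G))| \leq \Vert E \Vert = o(d_{n}).
\]
Combining this with part (a), both $G$ and $G \cup K$ are almost $d_{n}$-regular, so Lemma \ref{lm:adjacency-gap} applies to each of them with the same normalization $d_{n}$. It gives
\[
\tfrac{1}{d_n}\lambda_{n-2}(A(G)) = (1 + o(1))(1 - \lambda_{1}(\Ll(G))), \qquad \tfrac{1}{d_n}\lambda_{n-2}(A(G \cup K)) = (1 + o(1))(1 - \lambda_{1}(\Ll(G \cup K))).
\]
Therefore $\lambda_{1}(\Ll(G)) = 1 - o(1)$ is equivalent to $\lambda_{n-2}(A(G)) = o(d_{n})$, which by the Weyl bound is equivalent to $\lambda_{n-2}(A(G \cup K)) = o(d_{n})$, which is in turn equivalent to $\lambda_{1}(\Ll(G \cup K)) = 1 - o(1)$. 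This proves (b).

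The only mildly delicate point is the first step of part (b): verifying that $\Vert A(G \cup K) - A(G) \Vert = o(d_{n})$. Once that is in hand, the rest is just a two-sided application of Lemma \ref{lm:adjacency-gap}. I expect this norm estimate to be the main (but still routine) obstacle, since one has to be careful about the convention for $G \cup K$ in the multigraph setting and ensure that the difference matrix really has the claimed entrywise domination by $A(K)$; once the matrix entries are controlled, the row-sum bound on the operator norm of a symmetric nonnegative matrix does the rest.
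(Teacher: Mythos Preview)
Your argument is correct and follows essentially the same route as the paper: bound the spectral norm of the difference of adjacency matrices by the maximum degree of $K$, apply the Weyl perturbation bound to control $\lambda_{n-2}$, and then translate back via Lemma~\ref{lm:adjacency-gap}. If anything you are slightly more careful than the paper, which simply writes $A(G \cup K) = A(G) + A(K)$ and only spells out one direction of the equivalence; your treatment of the union convention and of both implications is a welcome addition.
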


\begin{proof}
Since $K$ has maximum degree $o(d_{n})$, every vertex of the graph $G \cup K$ has degree at most $(1 + o(1))d_{n} + o(d_{n}) = (1 + o(1))d_n$, so $G \cup K$ is almost $d_{n}$-regular, which proves a).

Now assume that $\lambda_{1}(\Ll(G)) = 1 - o(1)$. By Lemma \ref{lm:adjacency-gap}, we have $\lambda_{n-2}(A(G)) = o(d_{n})$. Since $K$ has maximum degree $o(d_{n})$, all eigenvalues of its adjacency matrix are $o(d_{n})$ in absolute value. We have $A(G \cup K) = A(G) + A(K)$, so for $i = 1, \ldots, n$ we obtain $\lambda_{i}(A(G \cup K)) = \lambda_{i}(A(G)) + o(d_{n})$ (since we are adding a matrix with spectral norm $o(d_{n})$). In particular $\lambda_{n-2}(G \cup K) = o(d_{n})$ so, as from a) $G \cup K$ is almost $d_{n}$-regular, by Lemma \ref{lm:adjacency-gap} we have $\lambda_{1}(\Ll(G \cup K)) = 1 - o(1)$.
\qedhere
\end{proof}

\subsection{From $G(n, p)$ to $G(n, M)$}\label{gnm-gnp}

In this section we show that if $M$ is sufficiently large, then typical graphs in $G(n, M)$ have good spectral properties, i. e. they have the spectral gap close to $1$ as $n \to \infty$.

It is well known that for $p$ and $M$ such that $p = \omega\left(\frac{\log n}{n}\right)$ and $M = \omega(n \log n)$ random graphs in $G(n, p)$ and $G(n, M)$ are connected with high probability \cite{bollobas2001random}. Since these bounds will be satisfied in all cases we are considering, we will assume that our graphs are connected.

We will rely on the following theorem about the $G(n, p)$ model (which is a corollary of a more general result from \cite[Theorem 3.6]{chung-gnp}):

\begin{theorem}\label{th:chung}
Suppose $np = \omega(\log^2 n)$ and let $g(n)$ be an arbitrary function tending to infinity. Then with high probability a random graph $G$ in $G(n, p)$ satisfies:
\[
\lambda_{1}(\Ll(G)) \geq 1 - \left(1 + o(1)\right)\frac{4}{\sqrt{np}} - \frac{g(n)\log^2 n}{np}
\]
\end{theorem}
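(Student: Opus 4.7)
The plan is to reduce the statement to a concentration bound on vertex degrees plus a sharp spectral estimate for the centered adjacency matrix, and then combine the two via perturbation theory. Concretely, since the eigenvalues of $\Ll(G) = I - D^{-1/2} A D^{-1/2}$ are $1 - \mu$, where $\mu$ ranges over eigenvalues of $\mathcal{A} := D^{-1/2} A D^{-1/2}$, and since $\mathcal{A}$ has top eigenvalue $1$ with eigenvector proportional to $D^{1/2}\mathbf{1}$, proving $\lambda_1(\Ll(G)) \geq 1 - \eta$ amounts to showing that every eigenvalue of $\mathcal{A}$ orthogonal to $D^{1/2}\mathbf{1}$ is at most $\eta$ in absolute value.

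First, I would establish degree concentration: each $d(v)$ is a sum of $n-1$ independent Bernoulli$(p)$ variables, so a Chernoff bound together with a union bound gives, for any slowly growing $g(n) \to \infty$,
\[
\max_{v}\, |d(v) - np| \leq \sqrt{g(n)\, np \log n}
\]
with probability $1 - o(1)$, provided $np = \omega(\log n)$. This upgrades Lemma \ref{lm:almost-regular} to a quantitative statement and will be the source of the $g(n)\log^2 n / np$ error term in the final bound, since multiplicative degree fluctuations of order $\sqrt{\log n / np}$ propagate through $D^{-1/2}$.

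Second, I would decompose $A = \bar{A} + E$ with $\bar{A} = p(J - I)$ and bound $\norm{E}$ by the trace method. The matrix $E$ is symmetric with independent, mean-zero, bounded entries of variance at most $p(1-p)$, and the sharp Füredi--Komlós / Vu estimate yields $\norm{E} \leq (2 + o(1))\sqrt{np(1-p)}$ with high probability; the hypothesis $np = \omega(\log^2 n)$ is precisely what is needed to carry out the higher-moment computation and to get the $o(1)$ term uniform. The rank-one part $pJ$ inside $\bar A$ contributes only to the top eigenvalue of $\mathcal{A}$, while the $-pI$ piece is a harmless $O(1/n)$ perturbation.

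To conclude, I write
\[
\mathcal{A} = D^{-1/2}\bar{A} D^{-1/2} + D^{-1/2} E D^{-1/2},
\]
argue that the first summand is, up to perturbations controlled by the degree concentration step, a rank-one matrix whose nontrivial spectrum lies in $o(1)$, and bound the spectral norm of the second summand by
\[
\frac{\norm{E}}{\min_v d(v)} \leq \frac{(2 + o(1))\sqrt{np}}{(1 - o(1))\,np} = \frac{(2 + o(1))}{\sqrt{np}}.
\]
Adding these contributions via Weyl's inequality on the subspace orthogonal to $D^{1/2}\mathbf{1}$, and absorbing all secondary errors coming from replacing $D^{-1/2}$ by $(np)^{-1/2} I$ into the $g(n)\log^2 n / np$ slack, gives the desired lower bound on $\lambda_1(\Ll(G))$. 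The main obstacle is sharpness of the constant: a soft trace-method argument only yields $O(1/\sqrt{np})$, and recovering the stated constant $4$ (which arises because the degree-normalization error and the Füredi--Komlós error each contribute a factor of roughly $2$) requires a careful tracking of lower-order terms, for which the strengthened hypothesis $np = \omega(\log^2 n)$ and the auxiliary function $g(n)$ provide exactly the room needed.
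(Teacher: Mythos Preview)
The paper does not prove this statement at all: it is quoted verbatim as a corollary of \cite[Theorem 3.6]{chung-gnp} and used as a black box. So there is no ``paper's own proof'' to compare against; your proposal is an independent proof sketch of a result the authors simply import.

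Your outline is essentially the standard route to such bounds (degree concentration via Chernoff, F\"uredi--Koml\'os/Vu for the centered adjacency matrix, then Weyl on the decomposition $\mathcal{A} = D^{-1/2}\bar A D^{-1/2} + D^{-1/2}ED^{-1/2}$) and would certainly deliver $\lambda_1(\Ll(G)) \geq 1 - O(1/\sqrt{np}) - o(1)$, which is all the paper ever uses downstream. The one point that does not hold up is your accounting for the constant $4$. In your decomposition the rank-one piece $pD^{-1/2}JD^{-1/2}$ has \emph{exactly} zero contribution on the orthogonal complement of $D^{1/2}\mathbf 1$, and the degree fluctuations of size $\sqrt{g(n)np\log n}$ feed only into the lower-order $g(n)\log^2 n/np$ term, not into the leading $1/\sqrt{np}$ term; so your argument as written yields leading constant $2$, not $4$. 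The constant $4$ in Chung's statement is an artifact of her particular trace-method computation in the expected-degree model and is not sharp, so this is not a defect in your argument --- but your explanation that ``the degree-normalization error and the F\"uredi--Koml\'os error each contribute a factor of roughly $2$'' is not what is actually happening.
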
 

In particular this implies that with high probability $\lambda_{1}(\Ll(G)) = 1 - o(1)$. We would like to prove the analogous result for the $G(n, M)$ model.

First we will need a lemma concerning the maximum vertex degree in a sparse graph.

\begin{lemma}\label{lemma:gnm-edges}
Let $G$ be random graph in $G(n, k)$ with $k \leq \sqrt{M}$, where $M = \omega(n \log n)$. Then with high probability:
\[
\max\limits_{v \in G} d(v) = o\left(\frac{M}{n}\right)
\]
In particular all eigenvalues of $A(G)$ are with high probability $o\left(\frac{M}{n}\right)$.
\end{lemma}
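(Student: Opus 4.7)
The plan is to first reduce the eigenvalue statement to a maximum-degree bound. Since $A(G)$ is a symmetric nonnegative matrix whose $i$-th row sum is exactly $d(v_i)$, we have $\|A(G)\|_{2} \leq \|A(G)\|_{\infty} = d_{\max}(G)$, so every eigenvalue of $A(G)$ is bounded in absolute value by the maximum degree. It therefore suffices to prove the degree bound $d_{\max}(G) = o(M/n)$ with high probability, and the ``in particular'' conclusion follows for free.

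For a fixed vertex $v$, let $X_v = d(v)$. Because $G$ is chosen uniformly from the $k$-edge subgraphs of $K_n$, the random variable $X_v$ is hypergeometric: of the $\binom{n}{2}$ possible edges, $n-1$ are incident to $v$, and we sample $k$ of them without replacement. The probability that a specified set of $t$ edges all lie in the sample is $\prod_{j=0}^{t-1} (k-j)/(\binom{n}{2}-j) \leq (2k/n^{2})^{t}$, so a union bound over the $\binom{n-1}{t}$ such subsets, followed by another union bound over the $n$ vertices and the inequality $\binom{n-1}{t} \leq (e(n-1)/t)^{t}$, yields
\[
\mathbb{P}\bigl(d_{\max}(G) \geq t\bigr) \leq n \binom{n-1}{t}\left(\frac{2k}{n^{2}}\right)^{t} \leq n \left(\frac{2ek}{tn}\right)^{t} \leq n \left(\frac{2e\sqrt{M}}{tn}\right)^{t},
\]
where in the last step we used $k \leq \sqrt{M}$.

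Now I would set $t := \max\bigl(2e^{2}\sqrt{M}/n,\ 2\log n\bigr)$ and verify two things. First, $t = o(M/n)$: the term $\sqrt{M}/n$ satisfies $\sqrt{M}/n = o(M/n)$ because $\sqrt{M} \to \infty$ (which holds since $M = \omega(n\log n)$), and $\log n = o(M/n)$ is precisely the hypothesis $M/n = \omega(\log n)$. Second, the tail probability tends to zero: by construction $tn/(2e\sqrt{M}) \geq e$, so $\log\bigl(tn/(2e\sqrt{M})\bigr) \geq 1$, which gives $t\log\bigl(tn/(2e\sqrt{M})\bigr) \geq t \geq 2\log n$ and therefore $n\bigl(2e\sqrt{M}/(tn)\bigr)^{t} \leq \exp(\log n - 2\log n) = 1/n \to 0$.

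There is no serious obstacle here; the argument is essentially a Chernoff-style bound for the hypergeometric distribution combined with a union bound over vertices. The only delicate point is the bookkeeping behind the choice of $t$: the target $M/n$ must simultaneously dominate the typical degree scale $\sqrt{M}/n$ (controlling the ratio inside the exponential) and the term $\log n$ (controlling the union bound over the $n$ vertices), and the hypothesis $M = \omega(n\log n)$ is exactly what makes both comparisons work.
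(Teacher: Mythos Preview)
Your argument is correct and takes a somewhat different route from the paper's. The paper first bounds the degree of a fixed vertex in $G(n,p)$ with $p = k/\binom{n}{2}$ using a binomial tail estimate, then transfers the conclusion to $G(n,k)$ via the general model-comparison inequality $\mathbb{P}_{k}(d) \leq 3\sqrt{k}\,\mathbb{P}_{p}(d)$ from \cite{bollobas2001random}. You instead work directly in $G(n,k)$: the degree of a vertex is hypergeometric, and you bound its upper tail by a union over $t$-subsets of incident edges, which is more self-contained and avoids the detour through $G(n,p)$. Your explicit choice $t = \max\bigl(2e^{2}\sqrt{M}/n,\ 2\log n\bigr)$ makes transparent exactly how the hypothesis $M = \omega(n\log n)$ enters, whereas the paper simply asserts the existence of a suitable $t$. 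One small slip: the intermediate bound $\prod_{j=0}^{t-1}(k-j)/(\binom{n}{2}-j) \leq (2k/n^{2})^{t}$ is off, since $2k/(n(n-1)) > 2k/n^{2}$; the correct bound is $(2k/(n(n-1)))^{t}$, but this combines with $\binom{n-1}{t} \leq (e(n-1)/t)^{t}$ to give exactly your $(2ek/(tn))^{t}$, so the rest of the argument is unaffected.
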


\begin{proof}
Fix a vertex $v \in G$. We first estimate the probability that $v$ has degree at least $d$ when $G$ is from $G(n, p)$ for $k = p{n \choose 2}$. Note that for such $k$ we have $pn \leq \left(\frac{2\sqrt{M}}{n}\right)$, so the average degree $\bar{d}$ is at most $1$ (and actually tends to $0$ if $M = o(n^2)$). We use a tail estimate for the binomial distribution (see \cite[Theorem 2.1]{luczak}):
\begin{align}\label{binomial-tail}
\mathbb{P}\left(d(v) \geq \bar{d} + t\right) \leq \exp\left(-\frac{t^2}{2\left( \bar{d} + \frac{2}{3}t \right)}\right) 
\end{align}
The right hand side can be made $n^{-\omega(1)}$ by taking $t = \omega(\log n)$ such that at the same time $t = o\left(\frac{M}{n}\right)$. The probability that {\it any} vertex has degree at least $d$, denote this by $\mathbb{P}_{p}(d)$, is $n$ times the right hand side of (\ref{binomial-tail}), which is therefore also $n^{-\omega(1)}$.

Now take the analogous probability in $G(n, k)$, denote it by $\mathbb{P}_{k}(d)$. From a general inequality between probabilities for $G(n,k)$ and $G(n,p)$ models \cite[Theorem 2.2, iii)]{bollobas2001random}, we have $\mathbb{P}_{k}(d) \leq 3\sqrt{k}\mathbb{P}_{p}(d)$. Since $\mathbb{P}_{p}(d)$ is $n^{-\omega(1)}$ and $k \leq \sqrt{M} \leq n$, the right hand side goes to zero, so with high probability all vertices in $G$ from $G(n, k)$ have degree $o\left(\frac{M}{n}\right)$.
\end{proof}

\begin{theorem}\label{th:gnm-laplacian}
Suppose $M = \omega(n \log^2 n)$. Then with high probability a random graph $G$ in $G(n, M)$ satisfies:

\begin{enumerate}[a)]
\item $G$ is almost $np$-regular
\item $\lambda_{1}(\Ll(G)) = 1 - o(1)$
\end{enumerate}
\end{theorem}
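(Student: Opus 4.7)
The plan is to transfer Theorem~\ref{th:chung} from the $G(n,p)$ model to $G(n,M)$ via a standard edge coupling. Set $p = 2M/(n(n-1))$ so that $\binom{n}{2}p = M$; the hypothesis $M = \omega(n\log^2 n)$ then becomes $np = \omega(\log^2 n)$, so Theorem~\ref{th:chung} applies to a random $G_p \in G(n,p)$, yielding $\lambda_1(\Ll(G_p)) = 1 - o(1)$ with high probability, and Lemma~\ref{lm:almost-regular} gives that $G_p$ is almost $np$-regular with high probability. The task reduces to transferring these two properties to $G_M \in G(n,M)$.

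To do this, I would use the natural edge-ranking coupling: assign i.i.d.\ uniform random variables $U_e \in [0,1]$ to all $\binom{n}{2}$ potential edges, let $G_p = \{e : U_e < p\}$, and let $G_M$ consist of the $M$ edges with the smallest $U_e$-values. Both marginals are correct, and by construction either $G_p \subseteq G_M$ or $G_M \subseteq G_p$, with $|E(G_p \triangle G_M)| = \bigl||E(G_p)| - M\bigr|$. Since $|E(G_p)|$ is binomial with mean $M$ and variance at most $M$, Chebyshev's inequality gives $|E(G_p \triangle G_M)| = O(\sqrt{M}\,h(n))$ with high probability, for any $h(n) \to \infty$ growing arbitrarily slowly.

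The crucial step is then showing that $K := G_p \triangle G_M$ has maximum degree $o(np)$. Conditionally on $G_p$ (which is almost $np$-regular) and on $\ell := |E(K)|$, the graph $K$ is a uniformly random $\ell$-subset either of $E(G_p)$ or of its complement in $\binom{[n]}{2}$; in either case the degree of a fixed vertex in $K$ is hypergeometric with mean $O(\ell/n) = O(\sqrt{M}\,h(n)/n) = o(M/n) = o(np)$. Standard hypergeometric tail bounds, analogous to the binomial estimate used in the proof of Lemma~\ref{lemma:gnm-edges}, together with a union bound over the $n$ vertices, then yield $\max_v d_K(v) = o(np)$ with high probability.

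Once the max-degree bound is in hand, the conclusion is immediate. Whether $G_M = G_p \cup K$ or $G_M = G_p \setminus K$, one has $A(G_M) = A(G_p) \pm A(K)$ with $\|A(K)\|_{\mathrm{op}} \leq \max_v d_K(v) = o(np)$, so the Weyl-type perturbation estimate underlying Lemma~\ref{lemma:haupt} (which is insensitive to the sign) combined with Lemma~\ref{lm:adjacency-gap} yields both (a) and (b) for $G_M$. I expect the main obstacle to be precisely the max-degree estimate for $K$: Lemma~\ref{lemma:gnm-edges} as stated requires $k \leq \sqrt{M}$, whereas $\ell$ may exceed $\sqrt{M}$ by a slowly growing factor, so a mild sharpening of that lemma or a direct hypergeometric concentration argument is needed to cover this regime.
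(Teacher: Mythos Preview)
Your approach is correct and is essentially the same as the paper's: couple $G(n,p)$ with $G(n,M)$, show the symmetric difference $K$ has maximum degree $o(np)$, and invoke Lemma~\ref{lemma:haupt}. The one technical difference is in how the coupling is arranged. Instead of your two-sided edge-ranking coupling, the paper takes $H\sim G(n,p)$ \emph{conditioned on} $|E(H)|\in[M,\,M+\sqrt{M}]$ (an event of probability bounded away from~$0$, so Theorem~\ref{th:chung} and Lemma~\ref{lm:almost-regular} still apply to $H$), and then lets $G$ be a uniformly random $M$-edge subgraph of $H$, so that $G\sim G(n,M)$ exactly. This makes $K=H\setminus G$ one-sided with $|E(K)|=k\le\sqrt{M}$ deterministically, and conditionally on $k$ the graph $K$ is distributed as $G(n,k)$; hence Lemma~\ref{lemma:gnm-edges} applies verbatim and the obstacle you flagged (that $\ell$ may exceed $\sqrt{M}$ by a slowly growing factor) simply does not arise. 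Your hypergeometric argument would also work, but the paper's conditioning trick is the cleaner way to close the gap.
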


\begin{proof}
Fix $M = \omega(n \log^2 n)$ and $p$ such that $M = p {n\choose 2}$. Let $H$ be a random graph in $G(n, p)$ conditioned on having between $M$ and $M + \sqrt{M}$ edges. Since we are conditiong on an event whose probability is bounded from below by some constant (which follows from the properties of binomial distribution), conclusion of Lemma \ref{lm:almost-regular}  and Theorem \ref{th:chung} still hold, so with high probability $H$ is almost $np$-regular and we have $\lambda_{1}(\Ll(H)) = 1 - o(1)$.

Now let $G$ be a subgraph of $H$ having exactly $M$ edges, chosen uniformly at random from all such subgraphs. $G$ has the same distribution as a random graph in $G(n, M)$, so it is enough to show that $\lambda_{1}(\Ll(G)) = 1 - o(1)$ with high probability. We will first estimate the eigenvalues of the adjacency matrix of $G$.

Let $K$ be a subgraph of $G$ consisting of edges not belonging to $H$, so that $A(K) = A(H) - A(G)$. If we condition on $H$ having exactly $M + k$ edges, for $0 \leq k \leq \sqrt{M}$, then we can think of this process as first choosing $K$ according to the $G(n, k)$ distribution (without any constraints) and then choosing $G$ uniformly among the graphs on the remaining  edges. As $K$ is a random graph in $G(n, k)$ with $k \leq \sqrt{M}$, from Lemma \ref{lemma:gnm-edges} we have that with high probability all vertices of $K$ have degree $o\left(\frac{M}{n}\right) = o\left(np\right)$.

Since the above property holds for all $k = 0, \ldots, \sqrt{M}$, by Lemma \ref{lemma:haupt}  $G$ is almost $np$-regular and $\lambda_{1}(\Ll(G)) = 1 - o(1)$ if and only if $\lambda_{1}(G \cup K) = \lambda_{1}(H) = 1 - o(1)$. As $\lambda_{1}(\Ll(H)) = 1 - o(1)$ with high probability, this finishes the proof.
\end{proof}

\subsection{Application to the triangular model}\label{spectral-final}

We will now proceed to prove Theorem \ref{th:theoremA} using results from the previous section. We use the notation of Section \ref{section:zuk-outline} --- $\lambda_{1}(G)$ denotes the smallest nonzero eigenvalue of $\Delta$ (equivalently: of $\Ll$) on graph $G$.

Fix $d > 1/3$ so that $3d = 1 + \varepsilon$. Let $\Gamma = \pres{\Ss}{\R}$ be a random group in the $\triangular{m}{d}$ model (we will asume that we actually choose $(2m)^{3d} \approx (2m-1)^{3d}$ relators in the triangular model) and let $L(\Ss)$ be the graph associated with $\Gamma$ as in Section \ref{section:zuk-outline}. Let us divide $L(\Ss)$ into three subgraphs $L_{1}$, $L_{2}, L_{3}$ as in Section \ref{section:zuk-outline}. 

Each $L_{i}$ is a random graph on $n = 2m$ vertices, with the marginal distribution the same as obtained by putting $n^{1 + \varepsilon}$ edges one by one uniformly at random. As such it is not a graph in $G(n, M)$ for $M = n^{1 + \varepsilon}$, as it allows duplicate edges. However, the number of duplicates is with high probability negligible as compared to $n^{1 + \varepsilon}$ --- choosing a random graph $L_{i}$ is equivalent to an experiment in which put $n^{1 + \varepsilon}$ balls into ${n\choose 2}$ bins at random and it can be easily calculated (see e.g. \cite[Chapter 5]{mitzenmacher}) that with high probability we will have no more than $O(n^{2\varepsilon})$ duplicate edges. Therefore if we denote by $L'_{i}$ a graph obtained from $L_{i}$ by collapsing all duplicate edges into one edge, the difference $K_{i} = L_{i}\backslash L'_{i}$ is a graph with $O(n^{2\varepsilon})$ edges. Its average degree is $o(1)$ and it can be easily shown, using the same methods as in proofs of Lemma \ref{lemma:haupt} and  Lemma \ref{lemma:gnm-edges}, that with high probability $K_{i}$ has maximum degree at most $o(n^{\varepsilon})$, so that $L_{i}$ and $L'_{i}$ will have asymptotically the same spectral properties. Therefore from now on we will treat $L_{i}$ as a random graph in $G(n, M)$, without duplicates.

\begin{lemma}\label{lm:laplacian-l1l2l3}
In the setting as above, with high probability $\lambda_{1}(L(\Ss)) > \frac{1}{2}$.
\end{lemma}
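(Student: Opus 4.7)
The plan is to reduce the claim to showing that each subgraph $L_i$ individually satisfies $\lambda_1(L_i) > \frac{1}{2}$ with high probability, and then to apply Żuk's decomposition lemma (\cite[Lemma 6]{zuk}, referenced in Section \ref{section:zuk-outline}), which states that $\lambda_1(L_i) > \frac{1}{2}$ for $i = 1, 2, 3$ implies $\lambda_1(L(\Ss)) > \frac{1}{2}$. A union bound over the three indices (the events need not be independent) reduces the task to controlling a single $L_i$.

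For a single $L_i$ I would invoke Theorem \ref{th:gnm-laplacian}. The discussion preceding the lemma shows that each $L_i$ has, up to removing duplicates, the same distribution as a random graph in the $G(n, M)$ model with $n = 2m$ and $M = n^{1+\varepsilon}$, and that the correction coming from duplicate edges is a graph of maximum degree $o(n^\varepsilon) = o(np)$, which is negligible by Lemma \ref{lemma:haupt}. Since $M = n^{1+\varepsilon} = \omega(n \log^2 n)$ for every $\varepsilon > 0$, Theorem \ref{th:gnm-laplacian} applies and gives, with high probability, that $L_i$ is almost $np$-regular and that $\lambda_1(\Ll(L_i)) = 1 - o(1)$.

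The last point to match is notation: the spectral criterion in Theorem \ref{th:lambda-1/2} and Żuk's decomposition lemma are formulated in terms of $\Delta = I - D^{-1}A$, whereas Theorem \ref{th:gnm-laplacian} produces a bound on the normalized Laplacian $\Ll = I - D^{-1/2}AD^{-1/2}$. However, $\Delta = D^{-1/2}\Ll D^{1/2}$, so the two matrices are similar and share the same spectrum; hence $\lambda_1(\Delta(L_i)) = 1 - o(1)$ with high probability, which is in particular larger than $\frac{1}{2}$ for all sufficiently large $n$. Combined with the union bound and Żuk's decomposition lemma, this proves the claim. No step here is a genuine obstacle, since all the probabilistic work has been absorbed into Theorem \ref{th:gnm-laplacian} and the almost-regularity machinery of Section \ref{spectral}; the only items needing verification are the trivial inequality $n^{1+\varepsilon} = \omega(n\log^2 n)$ and the equality of the spectra of $\Delta$ and $\Ll$.
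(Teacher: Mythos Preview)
Your reduction to the individual subgraphs $L_i$ and the appeal to Theorem~\ref{th:gnm-laplacian} are exactly what the paper does, and your remark about $\Delta$ and $\Ll$ being similar is correct. The gap is in the combining step.

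\.Zuk's Lemma 6, as invoked in Section~\ref{section:zuk-outline}, is stated and proved for the permutation model, where each $L_i$ is a genuinely $2v$-regular graph. In that setting the degree matrices satisfy $D_1=D_2=D_3=\tfrac{1}{3}D$ exactly, so $\Delta=\tfrac{1}{3}(\Delta_1+\Delta_2+\Delta_3)$ and the orthogonality conditions $f\perp_D 1$ and $f\perp_{D_i}1$ coincide; the variational argument then goes through cleanly. In the triangular model the $L_i$ are only \emph{almost} $n^\varepsilon$-regular, so $D_iD^{-1}$ is not $\tfrac{1}{3}I$ and the three orthogonality conditions differ. The abstract statement ``$\lambda_1(L_i)>\tfrac{1}{2}$ for all $i$ implies $\lambda_1(L(\Ss))>\tfrac{1}{2}$'' is not true without some such hypothesis: for a test vector $f\perp_D 1$ one only gets $\langle\Delta_i f,f\rangle_{D_i}>\tfrac{1}{2}\operatorname{Var}_{D_i}(f)$, and the $D_i$-means of $f$ need not vanish.

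The paper's proof of Lemma~\ref{lm:laplacian-l1l2l3} is precisely the work of repairing this: it uses almost-regularity (part (a) of Theorem~\ref{th:gnm-laplacian}) to write $D_iD^{-1}=\tfrac{1}{3}I+K_i$ with $\|K_i\|=o(1)$, obtains $\Delta=\tfrac{1}{3}\sum_i\Delta_i+K$ with $\|K\|=o(1)$, and then runs the quadratic-form argument with this controlled error. That correction term is the actual content of the lemma; citing \.Zuk's Lemma 6 as a black box skips it.
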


\begin{proof}
Let $\Delta_{i}$ denote the Laplacian on $L_{i}$, $\Delta$ --- the Laplacian of $L(\Ss)$, $D_{i}$ --- the degree matrix of $L_{i}$, $D$ --- the degree matrix of $L(\Ss)$ (in particular $D = D_{1} + D_{2} + D_{3}$). From the definition of $\Delta$ it is easily seen that:
\[
\Delta = \sum\limits_{i=1}^{3}D_{i}D^{-1}\Delta_{i}
\]
Since $M = n^{1 + \varepsilon} = \omega(n \log^2n)$, from part a) of Theorem \ref{th:gnm-laplacian} we have that each $L_{i}$ is with high probability almost $d_{n}$-regular for $d_{n} = n^{\varepsilon}$. In particular each nonzero entry of $D_{i}$ is $(1 + o(1))d_{n}$ and each nonzero entry of $D$ is $(3 + o(1))d_{n}$. Therefore we can write $D_{i}D^{-1} = \frac{1}{3}I + K_{i}$, where $K_{i}$ has only diagonal entries which are all $o(1)$. We get:
\[
\Delta = \frac{1}{3}\sum\limits_{i=1}^{3}\Delta_{i} + K
\]
where $K = \sum\limits_{i=1}^{3}K_{i}\Delta_{i}$ has all eigenvalues at most $o(1)$ (as $\Delta_{i}$ has spectrum in $[0, 2]$).

Now suppose that $\Delta$ has an eigenvector $f$ with eigenvalue $\leq 1/2$. We have:
\[
\frac{1}{2}\scalar{f}{f} \geq \scalar{\Delta f}{f} = \frac{1}{3}\sum\limits_{i=1}^{3}\scalar{\Delta_{i} f}{f} + \scalar{Kf}{f}
\]
By Theorem \ref{th:gnm-laplacian} $\lambda_{1}(L_{i})$ are uniformly bounded away from $1/2$ and $\scalar{Kf}{f}$ will be arbitrarily small as $n \to \infty$. Therefore the above inequality implies that for at least one $i$ we have $\scalar{\Delta_{i} f}{f} \leq \frac{1}{2}\scalar{f}{f}$, which contradicts $\lambda_{1}(L_{i}) > \frac{1}{2}$.
\end{proof}

\begin{proof}[Alternative proof of Theorem \ref{th:theoremA}]
By Theorem \ref{th:lambda-1/2} it is enough to show that $\lambda_{1}(L(\Ss)) > \frac{1}{2}$ with overwhelming probability.  Each $L_{i}$ is a random graph in $G(n, M)$ for $M \approx n^{1 + \varepsilon}$, in particular $M = \omega(n \log^2 n)$, so the assumptions of Theorem \ref{th:gnm-laplacian} are satisfied and for each $i$ we have $\lambda_{1}(L_{i}) = 1 - o(1)$ with high probability. By Lemma \ref{lm:laplacian-l1l2l3} we get $\lambda_{1}(L(\Ss)) > \frac{1}{2}$ with overwhelming probability and this finishes the proof.
\end{proof}

As we have seen in Section \ref{ch3}, to pass from the triangular model to the Gromov model it is necessary to use a modification of the triangular model, the {\it positive triangular model}, which results in $L(\Ss)$ being a random bipartite graph. Therefore what we actually need to pass to the Gromov model and prove Theorem \ref{th:theoremB} is to prove that groups in the positive model typically have property (T). To this end we would have to replace Theorem \ref{th:chung} with an analogous result for the $G(n, n, p)$ model of bipartite graphs (defined naturally). We have not found such a result in the literature, although it seems plausible that Theorem \ref{th:chung} also holds in the bipartite case.

\bibliography{bibliografia}{}

\providecommand{\MR}[1]{}\def\polhk#1{\setbox0=\hbox{#1}{\ooalign{\hidewidth
  \lower1.5ex\hbox{`}\hidewidth\crcr\unhbox0}}}
\providecommand{\bysame}{\leavevmode\hbox to3em{\hrulefill}\thinspace}
\providecommand{\MR}{\relax\ifhmode\unskip\space\fi MR }
\providecommand{\MRhref}[2]{%
  \href{http://www.ams.org/mathscinet-getitem?mr=#1}{#2}
}
\providecommand{\href}[2]{#2}
\begin{thebibliography}{BdlHV08}

\bibitem[AH00]{haxell}
Ron Aharoni and Penny Haxell, \emph{Hall's theorem for hypergraphs}, J. Graph
  Theory \textbf{35} (2000), no.~2, 83--88. \MR{1781189 (2001h:05072)}

\bibitem[AS00]{alon}
Noga Alon and Joel~H. Spencer, \emph{The probabilistic method}, second ed.,
  Wiley-Interscience Series in Discrete Mathematics and Optimization,
  Wiley-Interscience [John Wiley \& Sons], New York, 2000. \MR{1885388
  (2003f:60003)}

\bibitem[BdlHV08]{bekka2008kazhdan}
Bachir Bekka, Pierre de~la Harpe, and Alain Valette, \emph{Kazhdan's property
  ({T})}, New Mathematical Monographs, vol.~11, Cambridge University Press,
  Cambridge, 2008. \MR{2415834 (2009i:22001)}

\bibitem[Bol01]{bollobas2001random}
B{\'e}la Bollob{\'a}s, \emph{Random graphs}, second ed., Cambridge Studies in
  Advanced Mathematics, vol.~73, Cambridge University Press, Cambridge, 2001.
  \MR{1864966 (2002j:05132)}

\bibitem[B{\'S}97]{ballman}
W.~Ballmann and J.~{\'S}wi{\polhk{a}}tkowski, \emph{On {$L\sp 2$}-cohomology
  and property ({T}) for automorphism groups of polyhedral cell complexes},
  Geom. Funct. Anal. \textbf{7} (1997), no.~4, 615--645. \MR{1465598
  (98m:20043)}

\bibitem[Chu97]{chung}
Fan R.~K. Chung, \emph{Spectral graph theory}, CBMS Regional Conference Series
  in Mathematics, vol.~92, Published for the Conference Board of the
  Mathematical Sciences, Washington, DC, 1997. \MR{1421568 (97k:58183)}

\bibitem[CLV03]{chung-gnp}
Fan R.~K. Chung, Linyuan Lu, and Van~H. Vu, \emph{The spectra of random graphs
  with given expected degrees}, Internet Mathematics \textbf{1} (2003), no.~3.

\bibitem[Fri91]{friedman}
Joel Friedman, \emph{On the second eigenvalue and random walks in random
  {$d$}-regular graphs}, Combinatorica \textbf{11} (1991), no.~4, 331--362.
  \MR{1137767 (93i:05115)}

\bibitem[Fri03]{friedman-relativeexpanders}
Joel Friedman, \emph{Relative expanders or weakly relatively ramanujan graphs},
  Duke Math. J \textbf{118} (2003), 2003.

\bibitem[Gro93]{gromov}
M.~Gromov, \emph{Asymptotic invariants of infinite groups}, Geometric group
  theory, {V}ol.\ 2 ({S}ussex, 1991), London Math. Soc. Lecture Note Ser., vol.
  182, Cambridge Univ. Press, Cambridge, 1993, pp.~1--295. \MR{1253544
  (95m:20041)}

\bibitem[Hax95]{haxell2}
P.E. Haxell, \emph{{A condition for matchability in hypergraphs.}}, Graphs
  Comb. \textbf{11} (1995), no.~3, 245--248 (English).

\bibitem[JKV08]{kahnvu}
Anders Johansson, Jeff Kahn, and Van Vu, \emph{Factors in random graphs},
  Random Structures Algorithms \textbf{33} (2008), no.~1, 1--28. \MR{2428975
  (2009f:05243)}

\bibitem[J{\L}R00]{luczak}
Svante Janson, Tomasz {\L}uczak, and Andrzej Ruci\'{n}ski, \emph{Random
  graphs}, Wiley-Interscience Series in Discrete Mathematics and Optimization,
  Wiley-Interscience, New York, 2000. \MR{1782847 (2001k:05180)}

\bibitem[Kap43]{menage}
Irving Kaplansky, \emph{Solution of the ``probl\`eme des m\'enages.''}, Bull.
  Amer. Math. Soc. \textbf{49} (1943), 784--785. \MR{0009006 (5,86l)}

\bibitem[Lub94]{lubotzky}
Alexander Lubotzky, \emph{Discrete groups, expanding graphs and invariant
  measures}, Birkh\"auser Verlag, Basel, 1994. \MR{96g:22018}

\bibitem[MU05]{mitzenmacher}
Michael Mitzenmacher and Eli Upfal, \emph{Probability and computing -
  randomized algorithms and probabilistic analysis}, Cambridge University
  Press, 2005.

\bibitem[Oll03]{oll-aut-t}
Yann Ollivier, \emph{Spectral interpretations of {P}roperty {T}},
  \url{http://www.yann-ollivier.org/rech/publs/aut_spec_T.pdf}, 2003.

\bibitem[Oll05]{ollivier2005}
\bysame, \emph{A {J}anuary 2005 invitation to random groups}, Ensaios
  Matem\'aticos [Mathematical Surveys], vol.~10, Sociedade Brasileira de
  Matem\'atica, Rio de Janeiro, 2005. \MR{2205306 (2007e:20088)}

\bibitem[Oll10]{oll-update}
\bysame, \emph{Update to ,,{A} {J}anuary 2005 invitation to random groups'',
  {J}anuary 2010}, \url{http://www.yann-ollivier.org/rech/publs/rgupdates.pdf},
  2010.

\bibitem[OW10]{oll-wise}
Yann Ollivier and Daniel~T. Wise, \emph{Cubulating random groups at density
  less than $\frac{1}{6}$}, Trans. Amer. Math. Soc., to appear (2010).

\bibitem[{\.Z}uk96]{zuk-fr}
Andrzej {\.Z}uk, \emph{La propri\'et\'e ({T}) de {K}azhdan pour les groupes
  agissant sur les poly\`edres}, C. R. Acad. Sci. Paris S\'er. I Math.
  \textbf{323} (1996), no.~5, 453--458. \MR{1408975 (97i:22001)}

\bibitem[{\.Z}uk03]{zuk}
\bysame, \emph{Property ({T}) and {K}azhdan constants for discrete groups},
  Geom. Funct. Anal. \textbf{13} (2003), no.~3, 643--670. \MR{1995802
  (2004m:20079)}

\end{thebibliography}
\bibliographystyle{amsalpha}

\end{document}